\newtheorem{lemma}{Lemma}[section]
\newtheorem{theorem}[lemma]{Theorem}
\newtheorem{proposition}[lemma]{Proposition}
\newtheorem{corollary}[lemma]{Corollary}
\theoremstyle{definition}
\newtheorem{definition}[lemma]{Definition}
\theoremstyle{remark}
\newtheorem{example}[lemma]{Example}
\newtheorem{remark}[lemma]{Remark}
\def\SQ{\mathbb Q}    
\def\SZ{\mathbb Z}    
\def\SR{\mathbb R}    
\def\mm{\mathfrak m}    
\def\pp{\mathfrak p}    
\def\aa{\mathfrak a}    
\def\bb{\mathfrak b}    
\def\II{\mathfrak I}    
\def\MM{\mathfrak M}    
\def\PP{\mathfrak P}    
\def\int{\mbox{\rm{Int}}}             
\def\intz{\int (\SZ)}                 
\def\hV{\widehat{V}}            
\def\hm{\widehat{\mm}}          
\def\hE{\widehat{E}}            
\def\g{\mathcal g}
\def\Mm{\mathcal M}         
\def\Oo{\mathcal O}         
\def\Aa{\mathcal A}   
\def\Bb{\mathcal B}         
\def\Ž{\'e}
\def\{\`e}
\def\ˆ{\`a}
\def\¬ù{\`u}
\def\¬ô{\^{o}}
\def\{\^{e}}
\def\¬û{\^{u}}
\def\{\c c}
\newcommand\NC {\normalcolor}
\def\Uu{\mathcal U}
\def\cont{\mathrm{cont}}
\def\be{\begin{equation}}
\def\ee{\end{equation}}
\title[Integer-valued polynomials and the stacked bases property]{Integer-valued polynomials, Pr\"ufer domains\\ and the stacked bases property}
\author{Jacques Boulanger}
\author{Jean-Luc Chabert*}
\address{LAMFA CNRS-UMR 7352\\
Universit\'e de Picardie\\
80039 Amiens, France }
\email{jean-luc.chabert@u-picardie.fr}
\subjclass[2010]{Primary 13F20; Secondary 13C10, 13F05}
\keywords{Integer-valued polynomials, Pr\"ufer domain, stacked bases property, UCS property}
\begin{document}

\maketitle

\begin{abstract}
To study the question of whether every two-dimensional Pr\"ufer domain possesses the stacked bases property, we consider the particular case of the Pr\"ufer domains formed by integer-valued polynomials.  The description of the spectrum of the rings of integer-valued polynomials on a subset of a rank-one valuation domain enables us to prove that they all possess the stacked bases property. We also consider integer-valued polynomials on rings of integers of number fields and we reduce in this case the study of the stacked bases property to questions concerning $2\times 2$-matrices.
\end{abstract}


\section{Introduction}

The stacked bases property -or simultaneous bases property- was introduced in the attempts to generalize to domains the stacked bases theorem of finitely generated abelian groups or, more generally, of finitely generated modules over principal ideal domains.   

\begin{definition}(see~\cite[Chapter V, \S 4]{bib:FS})
An integral domain $D$ is said to have {\em the stacked bases property} if, for every free $D$-module $M$ with finite rank $m$ and every finitely generated submodule $N$ of $M$ of rank $n\leq m$, there exist rank-one projective $D$-modules $P_1, P_2, \cdots, P_m$ and nonzero ideals $\II_1, \II_2, \cdots, \II_n$ of $D$ such that:
$$M=P_1\oplus P_2\oplus\cdots\oplus P_m\quad,\quad  N=\II_1P_1\oplus \II_2P_2 \oplus\cdots\oplus \II_nP_n \,,$$ 
$$\textrm{and }\;\;\II_{j+1}\subseteq \II_j \;\textrm{ for } \;1\leq j\leq n-1\,. $$
\end{definition}

It is known that every Dedekind domain has the stacked bases property~\cite[Theorem 22.12]{bib:CR}. If we try to extend such a result to non-Noetherian domains, we are led to consider Pr\"ufer domains and it is known~\cite{bib:BK1} that every one-dimensional Pr\"ufer domain has the stacked bases property as well as every Pr\"ufer domain of finite character (that is, such that every nonzero element is contained in at most finitely many maximal ideals). Thus, since 1987, the question raised by Brewer, Katz and Ullery~\cite{bib:BKU} was: 

{\em Does every Pr\"ufer domain have the stacked bases property?}

\smallskip

For long ago, we also knew~\cite[4.3 and 4.4]{bib:chabert1972} that the ring of integer-valued polynomials 
$$\int(\SZ)=\{f(X)\in\SQ[X]\mid f(\SZ)\subseteq\SZ\}$$ 
is a two-dimensional Pr\"ufer domain that is not of finite character. 
That is why Heinzer, who was well aware of this fact, suggested to Brewer to ``try integer-valued polynomials''~\cite{bib:brewer2000}. This very natural ring $\int(\SZ)$ could be a counterexample as it was for several questions of commutative algebra. In fact, Brewer was not able to answer the question for the ring $\int(\SZ)$ but, with Klingler, he obtained nice results by considering other rings of integer-valued polynomials.

Recall that, for any domain $D$ with quotient field $K$ and any subset $E$ of $D$, the ring of {\em integer-valeud polynomials on $E$ with respect to $D$} is:
$$\int(E,D)=\{f(X)\in K[X]\mid f(E)\subseteq D\}\,.$$
Brewer and Klingler first proved in 1991 that:

\begin{proposition}\cite{bib:BK3}\label{th:12C}
If $D$ is a semi-local principal domain with finite residue fields then the ring $\int(D)=\int(D,D)$ is a two-dimensional Pr\"ufer domain which has the stacked bases property.
\end{proposition}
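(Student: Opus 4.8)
The plan is to establish the two assertions separately: that $\int(D)$ is a two-dimensional Pr\"ufer domain, and that it has the stacked bases property. The first is the more routine part. Since a semi-local principal ideal domain is a Dedekind domain, and its residue fields are finite by hypothesis, I would invoke the standard structure theory of integer-valued polynomials over such a base: $\int(D)$ is then a Pr\"ufer domain, and a computation of its prime spectrum --- the primes lying over $(0)$, together with, for each maximal ideal $\pp_i$ of $D$, the maximal ideals of $\int(D)$ contracting to $\pp_i$ --- shows that its Krull dimension is $1+\dim D=2$. It is useful to record here that $\int(D)=\bigcap_{i=1}^{s}\int(D_{\pp_i})$ is a finite intersection, $\pp_1,\dots,\pp_s$ being the finitely many maximal ideals of $D$; this is where semi-locality will be felt.

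For the stacked bases property the difficulty is that neither sufficient condition recalled above applies: $\int(D)$ has Krull dimension two, and, exactly as for $\int(\SZ)$, it is not of finite character, since for a maximal ideal $\pp_i$ of $D$ with uniformizer $t$ the element $t$ already lies in the infinitely many maximal ideals of $\int(D)$ over $\pp_i$ (these being parametrized by the completion $\hD_{\pp_i}$). I would therefore argue through the module theory of Pr\"ufer domains directly. Because $\int(D)$ is Pr\"ufer, every finitely generated torsion-free module is a direct sum of invertible ideals; hence the inclusion $N\hookrightarrow M=\int(D)^{m}$ of a finitely generated submodule of rank $n$ is encoded by an $m\times n$ matrix over $\int(D)$, and the stacked bases property becomes the assertion that this matrix can be brought, by automorphisms of the relevant projective modules, to a diagonal form whose ideal entries are nested. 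Over a valuation domain finitely generated ideals are principal, so such a diagonalization always exists locally; the entire problem is to patch the local diagonalizations into a global one.

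This patching is where I expect the main obstacle to lie, and where finiteness of the residue fields and of the set of primes of $D$ must both be used. By row and column operations one reduces the diagonalization of a general $m\times n$ matrix to a succession of reductions of $2\times2$ matrices over $\int(D)$, so that it suffices to solve the stacking problem for these. The spectrum description makes this tractable: there are only finitely many primes $\pp_1,\dots,\pp_s$ of $D$ to control, and over each of them the maximal ideals of $\int(D)$, though infinite in number, are governed by the single completion $\hD_{\pp_i}$, whose finite residue field supplies the compactness needed to choose the patching data uniformly. Producing, at the $2\times2$ level, the unimodular transformations that realize the nested invariant ideals while respecting every maximal ideal over each $\pp_i$ is the technical heart of the argument; everything else is formal.
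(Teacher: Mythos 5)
The paper does not reprove Proposition~\ref{th:12C}; it is quoted from Brewer--Klingler, and the argument it relies on is the one reproduced later in the paper for Theorem~\ref{th:43D}: for a Pr\"ufer domain the stacked bases property is \emph{equivalent} to the UCS property (Proposition~\ref{th:41D}, Brewer--Katz--Ullery), and the UCS property follows once one shows the ring is \emph{almost local-global}, i.e.\ that every proper factor ring $\int(D)/\II$ is local-global. That verification uses the spectrum: either $\II$ lies in no unitary prime $\PP_q$ and $\int(D)/\II$ is zero-dimensional, or one splits $\mathrm{Max}(\int(D)/\II)$ into the finitely many $\PP_q\supseteq\II$ (a semi-local factor) and the maximal ideals over the $\pp_i$ (a zero-dimensional factor), and glues via \cite[Lemma 2]{bib:BK2}. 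Your first paragraph (Pr\"ufer, dimension $2$, not of finite character, semi-locality entering through the finitely many $\pp_i$) is correct and matches the standard setup.

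The gap is in your treatment of the stacked bases property itself. You propose a direct diagonalization: encode $N\hookrightarrow\int(D)^m$ by a matrix, diagonalize locally over each valuation overring, reduce to $2\times 2$ blocks, and ``patch'' the local diagonalizations using finiteness of the residue fields. But the step you defer --- ``producing, at the $2\times2$ level, the unimodular transformations that realize the nested invariant ideals while respecting every maximal ideal over each $\pp_i$'' --- \emph{is} the entire content of the theorem, and nothing in your sketch supplies it. There are infinitely many maximal ideals of $\int(D)$ over each $\pp_i$, so there is no finite local-to-global patching available, and the reduction of an $m\times n$ matrix to $2\times 2$ blocks over a non-B\'ezout Pr\"ufer domain is itself nontrivial (in the paper it requires the BCU hypotheses of Propositions~\ref{th:51D} and~\ref{th:55E}). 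Indeed, whether such a direct diagonalization can be carried out over an arbitrary two-dimensional Pr\"ufer domain is essentially the open question the whole paper is about, so an argument for this special case cannot simply assert that the patching works. To close the gap you should replace the patching sketch by the UCS/local-global route: invoke Proposition~\ref{th:41D} and then check that every proper factor ring of $\int(D)$ is either zero-dimensional or glued from a semi-local and a zero-dimensional piece, exactly as in the proof of Theorem~\ref{th:43D}.
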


More interesting is the following:

\begin{theorem}\label{th:12A}
Let $V$ be an $n$-dimensional valuation domain and let $E$ be a subset of $V$ which is assumed to be precompact (that is, its completion is compact). Then, 
\begin{enumerate}
\item The domain $\int(E,V)$ is Pr\"ufer with dimension $n+1$ \cite{bib:CCL}.
\item The domain $\int(E,V)$ has the stacked bases property 	\cite{bib:BK2}.
\end{enumerate}
\end{theorem}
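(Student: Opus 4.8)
The plan is to treat the two assertions in turn, first establishing the Pr\"ufer property and the dimension count in (1), and then deducing the stacked bases property (2) from the resulting description of the spectrum. For (1), since being Pr\"ufer is a local property, I would show that every localization of $\intev$ at a maximal ideal is a valuation domain and compute the dimension by laying out the prime spectrum. Because $V$ is a valuation domain, its primes form a single chain $0=\pp_0\subset\pp_1\subset\cdots\subset\pp_n=\mm$, and the essential extra input is the completion: precompactness of $E$ means $\hE$ is compact in $\hV$, and I would use this to parametrize the maximal ideals of $\intev$ lying over $\mm$ by the points $\alpha\in\hE$, in the spirit of the classical description of the maximal ideals of $\intz$. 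Concretely, to each $\alpha\in\hE$ one attaches the evaluation-type valuation $f\mapsto v(f(\alpha))$ on $K(X)$, where $K$ is the quotient field of $V$, together with the composite valuations built from $v$ and the order of approximation of $X$ to $\alpha$; I would check that the localization of $\intev$ at the associated maximal ideal is exactly the valuation ring of such a composite valuation. Compactness is what guarantees that these exhaust the maximal ideals above $\mm$ and that the construction behaves uniformly.

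For the dimension, I would assemble these primes into chains. Over each $\pp_i$ there is the ``unary'' prime $\pp_i[X]\cap\intev$, and over $\mm$ the maximal ideals indexed by $\hE$ sit one step higher, so stacking them produces a chain $0\subset\pp_1[X]\cap\intev\subset\cdots\subset\pp_n[X]\cap\intev\subset\mm_\alpha$ of length $n+1$, while the composite-valuation description of the localizations bounds every chain by $n+1$ from above. The main obstacle in (1) is precisely the control of the maximal ideals above $\mm$: without precompactness the correspondence with $\hE$ degenerates, the localizations need not be valuation domains, and both the Pr\"ufer property and the dimension count can fail.

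For (2), the difficulty is that $\intev$ is in general neither one-dimensional nor of finite character, so the two known sufficient conditions for the stacked bases property recalled in the introduction do not apply directly; indeed, when the value group is not discrete a uniformizer lies in infinitely many maximal ideals $\mm_\alpha$. My plan is therefore to invoke a matrix-theoretic criterion valid for Pr\"ufer domains, under which the stacked bases property is equivalent to a diagonal-reduction (elementary-divisor) property for finite matrices; by standard reductions this may be tested on $2\times2$ matrices and rephrased in terms of the behaviour of finitely generated and invertible ideals. Each localization of $\intev$ is a valuation domain by (1), hence an elementary divisor domain, so the local diagonalizations exist pointwise; the remaining task is to patch them into a single global reduction carrying a divisibility chain $\II_{j+1}\subseteq\II_j$ of ideals.

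The crux of (2) is that the maximal ideals all lie over $\mm$ and, by the parametrization from (1), form a space carried by the \emph{compact} set $\hE$, so I would let compactness play the role that finiteness plays in the finite-character case. The matrix entries being polynomials, a local diagonalization valid at a point $\alpha$ should persist on a neighborhood of $\alpha$ in $\hE$; extracting a finite subcover then reduces the problem to finitely many maximal ideals, that is, to a semi-local situation in which the domain is of finite character and the stacked bases property is already available. The delicate step, which I expect to be the main obstacle here, is to perform this local-to-global gluing so that the patched ideals fit into a genuine descending chain $\II_{j+1}\subseteq\II_j$ rather than merely diagonalizing the matrix on each piece; verifying this compatibility, and that compactness indeed yields the required uniformity, is where the precompactness hypothesis is used decisively.
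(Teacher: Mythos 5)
The first thing to note is that the paper offers no proof of this statement: both assertions are quoted from the literature, (1) from Cahen--Chabert--Loper and (2) from Brewer--Klingler, so there is no internal argument to compare yours against. Judged on its own terms, your sketch of (1) is in the right spirit --- the known argument does rest on parametrizing the maximal ideals above $\mm$ by points of the compact set $\hE$ and showing the corresponding localizations are valuation domains --- but be aware that both steps (the exhaustion of the maximal ideals above $\mm$ by the $\mm_\alpha$, and the valuation-domain property of the localizations) are themselves substantial theorems requiring $v$-orderings and the containment $\int(E,V)\subseteq\int(\hE,\hV)$, and that the spectrum also contains the ``polynomial'' primes $\PP_q=qK[X]\cap\int(E,V)$, some of which are maximal of height one, so $\mathrm{Max}(\int(E,V))$ is not carried by $\hE$ alone.

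The genuine gap is in (2). For a Pr\"ufer domain the stacked bases property is equivalent to the UCS property (Proposition~\ref{th:41D}), not to an elementary-divisor or diagonal-reduction property, and your local-to-global mechanism does not work as described: a diagonalization over the localization at $\mm_\alpha$ involves denominators, and there is no reason it ``persists on a neighborhood of $\alpha$ in $\hE$'' in any sense that lets a finite subcover reduce matters to a semi-local ring --- and even if it did, $\int(E,V)$ itself is not semi-local, and the stacked bases property of finitely many localizations does not transfer back to the global ring. The compatibility of the resulting ideals into a chain $\II_{j+1}\subseteq\II_j$, which you flag as the delicate step, is precisely the whole content of the theorem and is left unaddressed. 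The argument that actually works (Brewer--Klingler's, reproduced in Section~4 of the paper for the rank-one case) is entirely different: one shows $\int(E,V)$ is \emph{almost local-global} by splitting, for each nonzero ideal $\II$, the maximal ideals containing $\II$ into those containing one of the finitely many $\PP_q\supseteq\II$ (yielding a semi-local factor ring) and those containing $\mm\,\int(E,V)+\II$ (a zero-dimensional factor ring), and then invokes almost local-global $\Rightarrow$ UCS $\Rightarrow$ stacked bases. Compactness of $\hE$ enters only through the structure of the spectrum (every prime above $\mm$ is maximal), not through any covering or patching argument.
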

 
Consequently, there are Pr\"ufer domains of any dimension which are not of finite character and which have the stacked bases property. The remaining question is then: 

\smallskip

\noindent{\bf Question}.
{\em Are there two-dimensional Pr\"ufer domains which do not have the stacked bases property? In particular, does $\int(\SZ)$ have the stacked bases property?}

\smallskip

We may notice that in the examples studied by Brewer and Klinger, the ground domain $V$ is quasi-local (Theorem~\ref{th:12A}) or semi-local (Proposition~\ref{th:12C}), while $\SZ$ is not semi-local. 
In this paper, we give no answer, just contributions by focusing on rings of integer-valued polynomials which are two-dimensional Pr\"ufer domains. There are two kinds of results: in a first part, concerning the `local case' (sections 2, 3 and 4), we show that, for every rank-one valuation domain $V$ and every subset $E$ of $V$, if $\int(E,V)$ is a Pr\"ufer domain, then $\int(E,V)$ has the stacked bases property (Theorem~\ref{th:43D}). In order to prove this result, using Frisch's results~\cite{bib:frisch2017}, we give first a complete description of the spectrum of $\int(E,V)$ when $\int(E,V)$ is Pr\"ufer (Theorem~\ref{th:35D}). In a second part (section 5), we study the `global case' with $\int(\Oo_K)$ where $\Oo_K$ denotes the ring of integers of a number field $K$ and we reduce the question to know whether $\int(\Oo_K)$ has the stacked bases property to questions concerning $2\times 2$-matrices (Theorem~\ref{th:56D}) by continuing the work undertaken in~\cite{bib:chabert2012}.


\section{Local study: Pr\"ufer domains and pseudo-monotone sequences}

\noindent{\bf Notation}.
{\em Let $K$ be a valued field, that is, a field endowed with a rank-one valuation $v$} [$v(K^*)$ is a subgroup of $(\SR,+)$\,]. {\em We denote by $V$ the valuation domain, $\mm$ its maximal ideal, and $E$ a subset of $V$.}

\medskip


In view of the first assertion of Theorem~\ref{th:12A}, the first question that arises and which is posed in \cite{bib:CCL} is:  

\smallskip

{\em The precompactness of $E$ is sufficient for $\int(E,V)$ to be a Pr\"ufer domain. Is it necessary?}

\smallskip

Here are partial answers given to this question:

\noindent $\bullet$ Yes, when $V$ is a discrete valuation domain \cite{bib:CCL}.

\noindent $\bullet$ Yes, when $E$ is a subgroup of the group $(V,+)$ \cite{bib:park}.

\noindent $\bullet$ Yes, when $E$ is a regular subset of $V$ \cite{bib:chabertgraz2016} (generalized regular subsets in Amice's sense are defined in \cite{bib:CEF}; every additive or multiplicative subgroup of $V$ is regular).

\noindent $\bullet$ Yes, when the completion $\widehat{V}$ of $V$ is maximally complete \cite{bib:chabertgraz2016} (that is, when $\widehat{V}$ possesses no proper immediate extensions; this is the case when $V$ is discrete).

\noindent $\bullet$ No in general: if $E$ is formed by the elements of a pseudo-convergent sequence of transcendental type, then $\int(E,V)$ is Pr\"ufer while $E$ is a non-precompact subset \cite{bib:LW} (the definition of pseudo-convergence is recalled below).

\smallskip

Finally, this last counterexample suggested to Peruginelli the following characterization:
 
\begin{theorem}\cite{bib:giulio2018}\label{th:21A}
The ring $\int(E,V)$ is Pr\"ufer if and only if the only pseudo-monotone sequences contained in $E$ are either Cauchy sequences or pseudo-conver\-gent sequences of transcendental type.
\end{theorem}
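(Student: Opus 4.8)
The plan is to characterize the Pr\"ufer property through the invertibility of finitely generated ideals and then to convert (non)invertibility into the presence or absence of the forbidden sequences. Recall that a domain is Pr\"ufer exactly when every nonzero finitely generated ideal is invertible, and that for this it suffices to treat ideals $(f,g)$ with $f,g\in\intev$. The preliminary reduction is a Ramsey-type extraction: from any sequence of pairwise distinct elements of $E$ one can pass to a subsequence on which, for all indices $i<j<k$, the comparison of $v(a_j-a_i)$ with $v(a_k-a_j)$ is fixed; such a subsequence is pseudo-monotone, and it falls into exactly one of the types Cauchy, pseudo-convergent of transcendental type, pseudo-convergent of algebraic type, pseudo-stationary, or pseudo-divergent. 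With this reduction the statement becomes: $\intev$ is Pr\"ufer if and only if $E$ contains no pseudo-stationary sequence, no pseudo-divergent sequence, and no pseudo-convergent sequence of algebraic type.

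For the necessity I argue by contraposition: given a forbidden sequence $\{a_n\}\subseteq E$ I construct two polynomials $f,g\in\intev$ whose ideal $(f,g)$ is not invertible. In each case one uses the monotone behaviour of the gauge $\delta_n=v(a_{n+1}-a_n)$ to prescribe the values $v(f(a_n))$, and takes $g$ of low degree (a linear polynomial, or the minimal polynomial of a pseudo-limit) to record the limiting behaviour, so as to exhibit a maximal ideal $M$ lying over $\mm$ at which $(f,g)_M$ fails to be principal, i.e. at which neither $f\mid g$ nor $g\mid f$ locally. Intuitively, a pseudo-stationary sequence packs infinitely many points at one fixed distance and a pseudo-divergent sequence lets them escape, so that in both cases $E$ is non-precompact in a way that obstructs invertibility; a pseudo-convergent sequence of algebraic type carries an algebraic pseudo-limit $\alpha$, and the minimal polynomial of $\alpha$ supplies the second generator $g$ whose values along the sequence break local principality. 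The transcendental case is absent from this list precisely because no such polynomial $g$ is available.

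For the sufficiency I assume that $E$ contains none of the forbidden sequences and show directly that every localization of $\intev$ at a maximal ideal is a valuation domain, hence that $\intev$ is Pr\"ufer. If $E$ is precompact this is already known (Theorem~\ref{th:12A}(1)), so the real issue is the non-precompact situation; but the absence of a Cauchy subsequence in some sequence yields, after the Ramsey extraction, a non-Cauchy pseudo-monotone sequence, which by hypothesis can only be pseudo-convergent of transcendental type. Thus the only source of non-precompactness is a transcendental pseudo-limit, and one checks that the valuation it induces on $K(X)$ makes the corresponding localization a valuation ring. The main obstacle is exactly this last point: transferring the purely combinatorial hypothesis on sequences into the algebraic statement that each local ring is a valuation domain, and in particular pinning down why an \emph{algebraic} pseudo-limit destroys local principality while a \emph{transcendental} one does not. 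This dichotomy governs whether the residually transcendental extension of $v$ attached to the pseudo-limit is a single valuation (good) or splits into several (bad), and making it precise is where the substantive work lies.
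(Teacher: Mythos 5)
This statement is not proved in the paper at all: it is imported from Peruginelli's article \cite{bib:giulio2018}, and the authors even remark immediately afterwards that they do not need the full characterization, only the weaker Lemma~\ref{th:23D} (if $\intev$ is Pr\"ufer then $E$ contains no pseudo-divergent and no pseudo-stationary sequence). So there is no in-paper proof to match yours against, and your attempt must be judged on its own.

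As it stands it is a plan rather than a proof, and both halves stop exactly where the work begins. For the necessity, you announce that from each forbidden sequence you will build a two-generated ideal $(f,g)$ that is not invertible, but you never produce $f$ and $g$, never exhibit the maximal ideal $M$, and never verify non-principality of $(f,g)_M$; note that for the pseudo-divergent and pseudo-stationary cases there is a much cleaner route, which is the one the paper itself uses in Lemma~\ref{th:23D}: such a sequence forces a whole closed ball $B(x_0,v(x_1-x_0))$ into the polynomial closure of $E$, so that $\intev$ has the overring $V\left[\frac{X-x_0}{x_1-x_0}\right]$, a polynomial ring over $V$, which is not Pr\"ufer --- and an overring of a Pr\"ufer domain is Pr\"ufer. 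The genuinely hard case, a non-Cauchy pseudo-convergent sequence of algebraic type, is precisely the one where your sketch says only that ``the minimal polynomial of $\alpha$ supplies the second generator''; this needs an actual computation of values $v(f(x_n))$, $v(g(x_n))$ showing neither generator divides the other locally. For the sufficiency, you concede in your last sentence that ``making it precise is where the substantive work lies'': showing that every localization at a maximal ideal above $\mm$ is a valuation domain when the only non-Cauchy pseudo-monotone sequences are of transcendental type is the entire content of Peruginelli's theorem, and it requires controlling \emph{all} maximal ideals above $\mm$ (indexed by ultrafilters on $E$, as in Theorem~\ref{th:35D}), not just those attached to a single transcendental pseudo-limit. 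Your Ramsey-type extraction of a pseudo-monotone subsequence is a correct and standard preliminary (modulo the small point that Cauchy sequences are themselves pseudo-convergent, generally of algebraic type, so the trichotomy you list is not a partition), but the two decisive steps are missing.
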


Let us recall now what are pseudo-monotone sequences. The notion of pseudo-convergent sequence was introduced by Ostrowski \cite[\S 11,\, n$^0$ 62]{bib:ostrowski} and later extended by symmetry in \cite{bib:chabert2010} to characterize the polynomial closure $\overline{E}$ of any subset $E$ of $V$.

\begin{definition}
A sequence $\{x_n\}_{n\geq 0}$ of elements of $V$ is said to be {\em pseudo-monotone} if one of the three following conditions hold:
\begin{enumerate}
\item the sequence is {\em pseudo-convergent}

\centerline{$\forall\, n>m>l\geq 0 \quad \quad v(x_n-x_m) > v(x_m-x_l)\,,$}

\item the sequence is {\em pseudo-divergent}

\centerline{$\forall\, n>m>l\geq 0 \quad \quad v(x_n-x_m) < v(x_m-x_l)\,,$}

\item the sequence is {\em pseudo-stationary}

\centerline{$\forall\, n>m>l\geq 0 \quad \quad v(x_n-x_m)=v(x_m-x_l)\,.$}
\end{enumerate}
\end{definition}

Moreover, a pseudo-convergent sequence $\{x_n\}_{n\geq 0}$ is said to be of {\em transcendental type} if, for every polynomial $f(X)\in K[X]$, the sequence $\{v(f(x_n))\}_{n\geq 0}$ is eventually stationary.

\smallskip

In order to prove that, if the ring $\int(E,V)$ is Pr\"ufer, then it has the stacked bases property, we need to describe the spectrum of $\int(E,V)$. In fact, we do not really need to know the characterization given by Theorem~\ref{th:21A} since to obtain this description of the spectrum, we only need to know the following:

\begin{lemma}\cite{bib:chabertgraz2016}\label{th:23D}
If the ring $\int(E,V)$ is Pr\"ufer, then $E$ does not contain any sequence which is either pseudo-divergent or pseudo-stationary.
\end{lemma}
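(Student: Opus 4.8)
The plan is to prove the contrapositive: assuming that $E$ contains a sequence $\{x_n\}$ that is pseudo-divergent or pseudo-stationary, I will show that $\int(E,V)$ is not Pr\"ufer. I note first that the statement is in fact immediate from Theorem~\ref{th:21A}: a pseudo-divergent or pseudo-stationary sequence is pseudo-monotone, but its differences satisfy $v(x_n-x_m)\not\to\infty$ (so it is not a Cauchy sequence) and do not have strictly increasing valuations (so it is not pseudo-convergent), whence Theorem~\ref{th:21A} forbids it in a Pr\"ufer ring. Since the purpose here is precisely to avoid invoking the full characterization, I would instead give a direct argument based on the elementary criterion that a domain $A$ is Pr\"ufer if and only if $fg\in(f^2,g^2)$ for all $f,g\in A$; failure of the Pr\"ufer property will then be witnessed by a single pair $f,g$ with $fg\notin(f^2,g^2)$.

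To build the witnessing pair I would use the terms of the sequence directly, taking linear polynomials such as $f=X-x_0$ and $g=X-x_1$ (or suitably rescaled differences $X-x_i$), which lie in $\int(E,V)$ because $x_n-x_i\in V$. As $f$ and $g$ are coprime in $K[X]$, the relation $fg=uf^2+wg^2$ is solvable in $K[X]$, and its general solution has the form $u=u_0+g^2t$, $w=w_0-f^2t$ with $t\in K[X]$ a free parameter; the obstruction to the Pr\"ufer property is then the nonexistence of a value of $t$ making both $u$ and $w$ integer-valued on $E$. The key input is the ultrametric control of the differences coming from pseudo-monotonicity: for a pseudo-divergent sequence one has $v(x_n-x_m)=v(x_n-x_{n-1})$ for $m<n$, with these valuations strictly decreasing, while for a pseudo-stationary sequence all $v(x_n-x_m)$ equal a common value $\gamma$. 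Evaluating the candidate identity along $\{x_n\}$ and using these estimates should force $u$ or $w$ to take values of negative valuation at infinitely many $x_n$, contradicting $u,w\in\int(E,V)$.

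The main obstacle is the verification step: showing that no choice of $t\in K[X]$ yields simultaneously integer-valued $u$ and $w$. This requires tracking the valuations $v(u(x_n))$ and $v(w(x_n))$ as $n\to\infty$ and exploiting that, for the divergent and stationary types, the gauge $v(x_n-x_m)$ either strictly decreases toward its limit or stays constant; in both regimes the residues of the $x_n$ fail to accumulate, so the polynomial $t$ cannot absorb the discrepancy at cofinitely many terms. It is instructive—and a useful sanity check—that the same computation would \emph{not} produce a contradiction for a pseudo-convergent sequence of transcendental type, since there the value sequence $\{v(h(x_n))\}$ stabilizes for every $h\in K[X]$ and $t$ can be chosen to repair integer-valuedness; this is exactly the dichotomy underlying Theorem~\ref{th:21A}. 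Finally, if the direct polynomial computation proves unwieldy, I would instead localize: construct a maximal ideal $\mathfrak{M}$ of $\int(E,V)$ as a limit of the evaluation valuations $f\mapsto v(f(x_n))$ along the sequence (using an ultrafilter on the indices), and show that $(f,g)$ is not locally principal at $\mathfrak{M}$, i.e. that neither $f\mid g$ nor $g\mid f$ holds in $\int(E,V)_{\mathfrak{M}}$, so that $\int(E,V)_{\mathfrak{M}}$ is not a valuation domain and $\int(E,V)$ is not Pr\"ufer.
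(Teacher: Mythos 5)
Your opening observation is correct: the lemma is just the easy direction of Theorem~\ref{th:21A}, and citing that theorem would suffice (the paper deliberately avoids doing so to stay self-contained). But the direct argument you then propose has a genuine gap, and your chosen witnesses can in fact fail. You never carry out the step you yourself flag as ``the main obstacle,'' namely showing that no $t\in K[X]$ makes both $u$ and $w$ integer-valued; and for the pair $f=X-x_0$, $g=X-x_1$ the required non-membership $fg\notin(f^2,g^2)$ is simply \emph{false} in the pseudo-stationary case when $v(x_1-x_0)=0$ (e.g.\ a sequence of elements with pairwise distinct residues, which exists whenever $V/\mm$ is infinite): there $f-g=x_1-x_0$ is a unit of $V$, so $(f,g)=\int(E,V)$, hence $(f^2,g^2)\supseteq(f,g)^3=\int(E,V)$ and $fg\in(f^2,g^2)$ trivially. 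So the criterion you invoke is not violated by this pair, and no amount of valuation bookkeeping along $\{x_n\}$ can rescue it; the vague ``suitably rescaled'' variants are not shown to lie in $\int(E,V)$ and do not repair this. The ultrafilter fallback is likewise only a sketch: you would still have to exhibit a pair that is not locally principal at $\MM$, which is exactly the missing content.

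The paper's proof sidesteps all of this with a structural argument you should compare against. If $E$ contains a pseudo-divergent or pseudo-stationary sequence, then by \cite[Prop.~4.8]{bib:chabert2010} the polynomial closure $\overline{E}$ contains the whole closed ball $B(x_0,v(x_1-x_0))$, so
$$\int(E,V)=\int(\overline{E},V)\subseteq \int\bigl(B(x_0,v(x_1-x_0)),V\bigr)\,.$$
Moreover the existence of such a sequence forces $v$ to be non-discrete (divergent case) or $V/\mm$ to be infinite (stationary case); in either case $\int(V)=V[X]$, so $\int(B(x_0,v(x_1-x_0)),V)=V\left[\frac{X-x_0}{x_1-x_0}\right]$ is a polynomial ring over $V$, which is not Pr\"ufer. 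Since every overring of a Pr\"ufer domain is Pr\"ufer, $\int(E,V)$ cannot be Pr\"ufer. If you want a self-contained proof, this is the route to take; a single-pair witness strategy would at minimum require a much more careful choice of $f$ and $g$ than the one you propose, together with the verification you left out.
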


\begin{proof}
Assume that the conclusion of Lemma~\ref{th:23D} does not hold: $E$ contains a sequence $\{x_n\}_{n\geq 0}$ which is either pseudo-divergent or pseudo-stationary. Then, it follows from \cite[Prop. 4.8]{bib:chabert2010} that the closed ball $$B(x_0,v(x_1-x_0))=\{y\in V\mid v(y-x_0)\geq v(x_1-x_0)\}$$ is contained in the polynomial closure $\overline{E}$ of $E$, where $$\overline{E}=\{y\in V\mid \forall f\in\int(E,V)\; \;f(y)\in V\}.$$
Consequently, 
$$\int(E,V)=\int(\overline{E},V)\subseteq \int(B(x_0,v(x_1-x_0)),V)\,. $$
Now, note that the existence in $V$ a pseudo-divergent sequence implies that $v$ is not discrete and the existence of a pseudo-stationary sequence implies that the residue field $V/\mm$ is infinite. It is known that in both cases \cite[I.3.16]{bib:CC}:
$$\int(B(0,1),V)=\int(V)=V[X]\,.$$ 
Thus
$$\int(B(x_0,v(x_1-x_0)),V) = V\left[\frac{X-x_0}{x_1-x_0}\right]$$
and $\int(E,V)$ cannot be Pr\"ufer since its overring $V\left[\frac{X-x_0}{x_1-x_0}\right]$ is not.
\end{proof}

\begin{remark}
Let $W$ be any valuation domain and let $E$ be a subset of $W$ such that $\int(E,W)$ is a Pr\"ufer domain. We know that it is the case when $E$ is precompact~\cite[Theorem 4.1]{bib:CCL}. If $E$ is not precompact, following the beginning of the proof of~\cite[Theorem 2.7]{bib:park}, $W$ has a height-one prime ideal $\pp$ and the domain $\int(E,W_{\pp})$ is Pr\"ufer. Consequently, $E$ has to satisfy 	the conditions of Theorem~\ref{th:21A} with respect to the rank-one valuation domain $V=W_{\pp}$.\end{remark}

\section{The spectrum of $\int(E,V)$}

We look first at what can be said when there is no pseudo-divergent sequence.

\begin{lemma}\label{th:31D}
If $E$ does not contain any pseudo-divergent sequence, then $E$ admits $v$-orderings.
\end{lemma}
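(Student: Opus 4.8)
\emph{Plan.} The plan is to construct a $v$-ordering of $E$ by induction, the only real difficulty being to show that at each step the relevant minimum is actually attained in $E$ and not merely approached. Recall that a $v$-ordering of $E$ is a sequence $(a_n)_{n\geq 0}$ of elements of $E$ such that, for every $n\geq 1$,
$$v\Big(\prod_{k=0}^{n-1}(a_n-a_k)\Big)=\min_{x\in E}\,v\Big(\prod_{k=0}^{n-1}(x-a_k)\Big).$$
Choosing $a_0\in E$ arbitrarily, I would suppose $a_0,\dots,a_{n-1}$ constructed and set $w_n(x)=\sum_{k=0}^{n-1}v(x-a_k)$ for $x\in E$. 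Since $v$ is of rank one, $v(K^*)$ is a subgroup of $(\SR,+)$, and as $x,a_k\in V$ every term $v(x-a_k)$ is nonnegative; hence $\gamma_n=\inf_{x\in E}w_n(x)$ exists in $\SR_{\geq 0}$. The whole point is to prove that this infimum is attained, for then one takes a minimizer as $a_n$ and the induction proceeds.

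To show the infimum is attained I would argue by contradiction: if it is not, I pick $y_j\in E$ with $w_n(y_j)$ strictly decreasing to $\gamma_n$. Because there are only finitely many indices $k$, after passing to a subsequence I may assume that for each $k$ the real sequence $\big(v(y_j-a_k)\big)_j$ is strictly increasing, strictly decreasing, or constant. Let $S$ be the set of indices $k$ for which it is strictly decreasing; since $w_n(y_j)=\sum_k v(y_j-a_k)$ is strictly decreasing, $S$ is nonempty.

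The key step is then to see that $(y_j)$ is pseudo-divergent, contradicting the hypothesis. First, for $k,k'\in S$ the \emph{constant} $v(a_k-a_{k'})$ cannot equal the \emph{strictly decreasing} quantity on the right of the ultrametric relation $v\big((y_j-a_k)-(y_j-a_{k'})\big)=\min\{v(y_j-a_k),v(y_j-a_{k'})\}$ for infinitely many $j$ unless the two valuations coincide; this forces $v(y_j-a_k)=v(y_j-a_{k'})$ for all large $j$. Thus, for $j$ large, all indices in $S$ share a common value $\delta_j:=v(y_j-a_k)$ ($k\in S$), with $(\delta_j)$ strictly decreasing. Fixing one $k_0\in S$, for large $l<m<p$ the ultrametric equality gives $v(y_p-y_m)=\delta_p$ and $v(y_m-y_l)=\delta_m$, whence $v(y_p-y_m)=\delta_p<\delta_m=v(y_m-y_l)$. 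Therefore the subsequence $(y_j)$ is pseudo-divergent and lies in $E$, contradicting the assumption; consequently $\gamma_n$ is attained, completing the inductive construction.

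The main obstacle I anticipate is exactly the passage from a merely minimizing sequence to a genuine pseudo-divergent one: one must control all $n$ terms $v(y_j-a_k)$ simultaneously and identify the single strictly decreasing value $\delta_j$ carried by the indices in $S$. The subsequence extraction making each coordinate monotone, together with the ultrametric collapse forcing the $S$-valuations to coincide, is the heart of the argument; the remainder is bookkeeping.
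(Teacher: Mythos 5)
Your proof is correct and follows essentially the same route as the paper: induct, suppose the infimum is not attained, decompose $v(h(x))$ as a sum of the $n$ nonnegative sequences $v(x-a_k)$, extract a coordinate with a strictly decreasing subsequence, and derive a pseudo-divergent sequence in $E$. You merely make explicit two steps the paper leaves implicit (the monotone-subsequence extraction and the ultrametric computation showing $v(y_p-y_m)<v(y_m-y_l)$), and the intermediate claim that all indices of $S$ carry a common value $\delta_j$ is unnecessary, since one strictly decreasing coordinate $k_0$ already suffices.
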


Let us recall the notion of $v$-ordering introduced by Bhargava~\cite{bib:bhargava1}: a {\em $v$-ordering of $E$} is a sequence $\{a_n\}_{n\geq 0}$ of elements of $E$ where, for every $n\geq 1$, $a_n$ satisfies $$v\left(\prod_{k=0}^{n-1}(a_n-a_k)\right)=\inf_{x\in E}v\left(\prod_{k=0}^{n-1}(x-a_k)\right)\,.$$
\begin{proof}
We prove by induction on $n$ that there exists a sequence $a_0,\ldots, a_n$ which is the beginning of a $v$-ordering of $E$. We know that $a_0$ may be any element of $E$. Assume that $a_0,\ldots,a_{n}$ are choosen in such a way that there are the first terms of a $v$-ordering of $E$. We are looking for some $a_{n+1}\in E$ which minimizes $v(h(x))$ where $h(x)=\prod_{k=0}^{n-1}(x-a_k)$ for  $x\in E$. If $\inf_{x\in E}v(h(x))$ was not a minimum, there would exist an infinite sequence $\{x_m\}_{m\geq 0}$ of elements of $E$ such that the sequence $\{v(h(x_m))\}_{m\geq 0}$ is strictly decreasing. Since the sequence $\{v(h(x_m))\}$ is the sum of the $n$ sequences $\{v(x_m-a_i)\}$ of non-negative numbers, at least one of these $n$ sequences admits an infinite subsequence which is strictly decreasing. This is a contradiction with the hypothesis. Thus, there exists $a_{n+1}\in E$ such that $v(h(a_{n+1}))=\inf_{x\in E}v(h(x))$.
\end{proof} 

\begin{lemma}\label{th:33}
If $E$ admits $v$-orderings, then every prime ideal $\PP$ of $\int(E,V)$ lying over the maximal ideal $\mm$ contains the ideal $\int(E,\mm)=\{f\in K[X]\mid f(E)\subseteq \mm\}$.
\end{lemma}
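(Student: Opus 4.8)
The plan is to reduce everything to an explicit $V$-module basis of $\int(E,V)$ built from the $v$-ordering, and to establish along the way the stronger equality $\int(E,\mm)=\mm\,\int(E,V)$, from which the lemma is immediate. First I would fix a $v$-ordering $\{a_n\}_{n\geq 0}$ of $E$ (available by hypothesis) and introduce the associated normalized polynomials
\[
f_n(X)=\frac{\prod_{k=0}^{n-1}(X-a_k)}{\prod_{k=0}^{n-1}(a_n-a_k)}\qquad(n\geq 0),\quad f_0=1.
\]
The defining inequality of a $v$-ordering says precisely that the denominator realizes $\inf_{x\in E}v\bigl(\prod_{k<n}(x-a_k)\bigr)$, so $v(f_n(x))\geq 0$ for every $x\in E$ and hence $f_n\in\int(E,V)$. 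Since $\deg f_n=n$, the family $\{f_n\}$ is a $K$-basis of $K[X]$, and evaluation at the $v$-ordering points is triangular: $f_n(a_m)=0$ for $n>m$, $f_m(a_m)=1$, and $f_n(a_m)\in V$ whenever $n\leq m$ (again because $f_n\in\int(E,V)$ and $a_m\in E$).

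Next I would take an arbitrary $f\in\int(E,\mm)$, expand it as $f=\sum_{n=0}^{d}c_nf_n$ with $c_n\in K$, and prove by induction on $m$ that every $c_m\in\mm$. Evaluating at $a_m$ and using the triangularity yields the Newton-type relation $c_m=f(a_m)-\sum_{n=0}^{m-1}c_nf_n(a_m)$; here $f(a_m)\in\mm$ because $a_m\in E$, while each $f_n(a_m)\in V$, so once $c_0,\dots,c_{m-1}\in\mm$ the whole right-hand side lies in $\mm$ (using $\mm V=\mm$). This shows $f=\sum_n c_nf_n$ with all $c_n\in\mm$ and all $f_n\in\int(E,V)$, i.e.\ $f\in\mm\,\int(E,V)$. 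Combined with the trivial inclusion $\mm\,\int(E,V)\subseteq\int(E,\mm)$, I obtain $\int(E,\mm)=\mm\,\int(E,V)$. The lemma then follows at once: a prime $\PP$ lying over $\mm$ satisfies $\PP\cap V=\mm$, so $\mm\subseteq\PP$, whence $\mm\,\int(E,V)\subseteq\PP$ and therefore $\int(E,\mm)\subseteq\PP$.

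The only step requiring real care is the first one, and that is where the hypothesis is used essentially: it is exactly the $v$-ordering property that guarantees $f_n\in\int(E,V)$, which in turn makes the coefficients $f_n(a_m)$ integral so that the induction can propagate membership in $\mm$. Without $v$-orderings the $f_n$ need not be integer-valued and the triangular descent collapses, which is why Lemma~\ref{th:31D} is invoked beforehand. I would also dispose of the degenerate case where $E$ is finite separately: there a $v$-ordering $a_0,\dots,a_{N-1}$ is finite, the same induction handles the component of $f$ of degree $<N$, and the remaining multiples of $\prod_{a\in E}(X-a)$ lie in $\mm\,\int(E,V)$ trivially since they vanish on $E$.
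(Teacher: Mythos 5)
Your proof is correct and follows essentially the same route as the paper: both expand $f\in\int(E,\mm)$ in the basis $\{f_n\}$ attached to the $v$-ordering and exploit the triangular recursion $c_m=f(a_m)-\sum_{n<m}c_nf_n(a_m)$. The only (immaterial) difference is that the paper extracts a single $t\in\mm$ with $v(t)=\min_k v(c_k)$ and concludes $f\in t\,\int(E,V)\subseteq\PP$, whereas you push each coefficient into $\mm$ to get the equivalent conclusion $f\in\mm\,\int(E,V)\subseteq\PP$.
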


\begin{proof}
Let $\{a_n\}_{n\geq 0}$ be a $v$-ordering of $E$. For each $n\geq 0$, let $f_n(X)=\prod_{k=0}^{n-1}\frac{X-a_k}{a_n-a_k}$. One knows that the polynomials $f_n$ form a basis of the $V$-module $\int(E,V)$ (cf.\cite{bib:bhargava1}). Let $f\in\int(E,\mm)$ and write: $$f(X)=\sum_{k=0}^dc_kf_k(X)\textrm{ with }c_k\in V.$$ Clearly, for instance by means of the recursive formula $$c_k=f(a_k)-\sum_{h=0}^{k-1}c_hf_h(a_k),$$ one has:
$$\inf_{a\in E}v(f(a))=\min_{0\leq k\leq d}v(c_k)=\min_{0\leq k\leq d}v(f(a_k))=\delta>0.$$
Let $t\in \mm$ be such that $v(t)=\delta$, then $\frac{1}{t}f(X)\in\int(E,V)$, and hence, $f(X)\in t\,\int(E,V)\subseteq \PP$.
\end{proof}

Now, we see what can be said when moreover there is no pseudo-stationary sequence.

\begin{lemma}\label{th:33D}
If $E$ does not contain any pseudo-divergent sequence or any pseudo-stationary sequence, then every polynomial $f(X)\in\int(E,V)$	takes on $E$ only finitely many values modulo $\mm$.
\end{lemma}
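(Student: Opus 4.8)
The plan is to argue by contradiction, using the hypothesis on $E$ to force any ``spreading out'' of the values of $f$ to come from a pseudo-convergent sequence, and then to show that a polynomial cannot spread out the residues of a pseudo-convergent sequence. So suppose $f$ takes infinitely many values modulo $\mm$ on $E$. Choosing one point of $E$ in each of infinitely many residue classes hit by $f(E)$, I obtain a sequence $\{x_n\}_{n\geq 0}$ of elements of $E$ with $f(x_n)\not\equiv f(x_m)\pmod{\mm}$ for all $n\neq m$. Since $f(x_n),f(x_m)\in V$, incongruence modulo $\mm$ means exactly that the difference is a unit, i.e. $v(f(x_n)-f(x_m))=0$ for all $n\neq m$; in particular the $x_n$ are pairwise distinct.

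Next I would extract a pseudo-monotone subsequence. Colouring each triple of indices $l<m<n$ according to whether $v(x_n-x_m)$ is greater than, equal to, or less than $v(x_m-x_l)$, Ramsey's theorem for $3$-element subsets produces an infinite set of indices on which this colour is constant; the corresponding subsequence $\{y_k\}$ is then, by the very definition, pseudo-convergent, pseudo-stationary, or pseudo-divergent. By hypothesis $E$ contains no pseudo-divergent and no pseudo-stationary sequence, so $\{y_k\}$ must be \emph{pseudo-convergent}. Passing to a subsequence does not disturb the incongruence property, so we still have $v(f(y_n)-f(y_k))=0$ for all $n\neq k$.

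The crux — and the step I expect to be the main obstacle — is to contradict this for a pseudo-convergent sequence. The key input is a classical property of pseudo-convergent sequences (going back to Ostrowski \cite{bib:ostrowski} and Kaplansky): the image $\{f(y_k)\}$ of a pseudo-convergent sequence under a nonconstant polynomial $f\in K[X]$ is again pseudo-convergent. Granting this, for $n>k$ one has $v(f(y_n)-f(y_k))=v(f(y_{k+1})-f(y_k))$, and this gauge is \emph{strictly increasing} in $k$; hence it cannot equal $0$ for all pairs, contradicting the choice of $\{x_n\}$, and the lemma follows. To keep the argument self-contained one can reprove the key input directly: since $\{y_k\}$ is pseudo-convergent, writing $\gamma_k=v(y_{k+1}-y_k)$ gives $v(y_n-y_k)=\gamma_k$ for $n>k$ with $\gamma_k$ strictly increasing; embedding $K$ in a spherically complete immediate extension $\widetilde K$ in which $\{y_k\}$ acquires a pseudo-limit $\hat y$ with $v(y_k-\hat y)=\gamma_k$, and expanding $f(X)=\sum_{i=0}^{d}c_i(X-\hat y)^i$ in powers of $X-\hat y$, one checks that the term of least valuation in $f(y_k)-f(\hat y)=\sum_{i\geq 1}c_i(y_k-\hat y)^i$ is eventually unique (the valuations $v(c_i)+i\gamma_k$ are affine of distinct slopes in $\gamma_k$), so $v(f(y_k)-f(\hat y))$ is eventually strictly monotone, which is exactly the pseudo-convergence of $\{f(y_k)\}$.
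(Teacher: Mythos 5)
Your proof is correct, but it takes a genuinely different route from the paper's. The paper works positively through the $v$-ordering machinery: it invokes Lemma~\ref{th:31D} to get a $v$-ordering $\{a_n\}$ of $E$, reduces to the basis polynomials $f_n(X)=\prod_{k=0}^{n-1}\frac{X-a_k}{a_n-a_k}$, and shows each $f_n$ takes finitely many residues by proving that the value sets $N_k=\{v(x-a_k)\}$ and the residue sets $S_{k,\nu}$ are finite --- the absence of pseudo-divergent sequences forcing the finiteness of the $N_k$ and the absence of pseudo-stationary sequences that of the $S_{k,\nu}$ --- which even yields an explicit bound $1+\prod_k\mathrm{Card}(N_k)\cdot\max_\nu\mathrm{Card}(S_{k,\nu})$; a general $f$ is then handled as a $V$-linear combination of the $f_n$. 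You instead argue by contradiction: a sequence of points of $E$ whose $f$-values are pairwise incongruent mod $\mm$ satisfies $v(f(x_n)-f(x_m))=0$ for $n\neq m$, an infinite Ramsey extraction produces a pseudo-monotone subsequence, the hypotheses kill two of the three colours, and the classical Ostrowski--Kaplansky fact that a polynomial image of a pseudo-convergent sequence is eventually pseudo-convergent (which is exactly the paper's Lemma~\ref{lem:310}, quoted later for a different purpose) makes the gauge $v(f(y_n)-f(y_m))$ eventually strictly increasing, contradicting its vanishing. Your argument is shorter and more conceptual, bypasses $v$-orderings entirely, and makes transparent where each hypothesis enters; what it costs is the appeal to infinite Ramsey and to Kaplansky's lemma (or your sketched reproof via a pseudo-limit in a maximally complete immediate extension, which is sound), whereas the paper's proof is elementary, quantitative, and reuses the $v$-ordering setup it already needs for Lemma~\ref{th:33}. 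One small point of care: the pseudo-convergence of $\{f(y_k)\}$ holds only from some index $n_0$ on, so the final contradiction should be drawn from indices $\geq n_0$; you acknowledge this implicitly (``eventually''), and it costs nothing.
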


\begin{proof}
Let $\{a_n\}_{n\geq 0}$ be a $v$-ordering of $E$. Fix some $n\geq 0$ and let us prove first the conclusion for the polynomial $f_n(X)=\prod_{k=0}^{n-1}\frac{X-a_k}{a_n-a_k}$. Let $h_n(X)=\prod_{k=0}^{n-1}(X-a_k)$ and $w_E(n)=v(\prod_{k=0}^{n-1}(a_n-a_k))$. By definition of $v$-orderings, for every $x\in E$, $v(h_n(x))\geq w_E(n)$. If $v(h_n(x))>w_E(n)$, then $f_n(x)\equiv 0\pmod{\mm}$. 	We may restrict our study to the elements $x$ of $E_0=\{x\in E\mid v(h_n(x))=w_E(n)\}$. 

We claim that, for each $0\leq k<n$, the set $N_k=\{v(x-a_k)\mid x\in E_0\}$  is finite. Assume that there exists some $k$ such that the set $N_k$ is infinite. Then, it contains a stricly increasing or strictly decreasing sequence $\{v(x_m-a_k)\}_{m\geq 0}$, in fact, the sequence is stricly increasing, since otherwise the sequence $\{x_m\}_{m\geq 0}$ would be pseudo-divergent, contrarily to the hypothesis. It follows then from the equality $v(h_n(x))=w_E(n)$ that there exists at least another $k'$ such that the set $\{v(x_m-a_{k'})\mid m\geq 0\}$ is also infinite. For the same reason, we can extract from the sequence $\{x_m\}_{m\geq 0}$ a sequence $\{x'_m\}_{m\geq 0}$ such that the sequence $\{v(x'_m-a_{k'})\}_{m\geq 0}$ is strictly increasing. And so on $\ldots$ and we reach a contradiction.

Let $S$ be a set of representatives of $V$ modulo $\mm$ and, for each $\nu\in v(V^*)$, choose some $d_\nu\in V$ such that $v(d_\nu)=\nu$. We claim that, for every $k\in \{0,1,\ldots,n-1\}$ and every $\nu\in N_k$, the set $$S_{k,\nu}=\{s\in S\mid \exists x\in E_0\; v(x-a_k)=\nu \textrm{ and }\frac{x-a_k}{d_\nu}\equiv s\pmod{\mm}\}$$ is finite. Assume that, for some $k$ and some $\nu\in N_k$, this set is infinite. Then, we could extract from $E_0$ a sequence $\{x_m\}_{m\geq 0}$ such that $v(x_m-a_k)=\nu$ and $\frac{x_m-a_k}{d_\nu}\equiv s_m\pmod{\mm}$ where all the $s_m$'s are distinct. Consequently, for all $m\not= l$, we would have $v(x_m-x_l)=\nu$ and the sequence $\{x_m\}_{m\geq 0}$ would be stationary.

Finally, the number of values of $f_n(x)$ modulo $\mm$ for $x\in E$ is finite since it is less or equal to $1+\prod_{k=0}^{n-1} \textrm{Card}(N_k)\times\max\,\{\textrm{Card}(S_{k,\nu})\mid\nu\in N_k\} $. Now, let $f(X)$ be any polynomial of $\int(E,V)$ of degree $d$. The values of $f$ on $E$ are linear combinations of those of the polynomials $f_0, f_1,\ldots, f_d$ with coefficients $c_0,c_1,\ldots,c_d\in V$ which depend only on $f$. Consequently, there are finitely many modulo $\mm$. 
\end{proof}

Recall Frisch's following result:

\begin{lemma}\cite[Lemma 5.1]{bib:frisch2017}\label{th:34D}
If every polynomial of $\int(E,V)$ takes only finitely many values modulo $\mm$, then every prime ideal of $\int(E,V)$ containing $\int(E,\mm)$ is maximal with residue field isomorphic to $V/\mm$.
\end{lemma}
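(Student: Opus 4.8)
The plan is to pass to the quotient ring $A=\int(E,V)/\int(E,\mm)$ and to show that $A$ is a zero-dimensional $k$-algebra, where $k=V/\mm$, all of whose residue fields are equal to $k$. Since the prime ideals of $\int(E,V)$ containing $\int(E,\mm)$ correspond bijectively to the prime ideals of $A$, establishing this statement about $A$ yields both conclusions of the lemma at once.

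First I would introduce the reduction homomorphism $\varphi:\int(E,V)\to\prod_{x\in E}V/\mm$ sending $f$ to the family $(f(x)+\mm)_{x\in E}$. This is a ring homomorphism whose kernel is precisely $\int(E,\mm)$, so that $A$ embeds as a subring of $\prod_{x\in E}V/\mm$. The reductions of the constant polynomials then give an embedding $k\hookrightarrow A$, making $A$ a $k$-algebra, and every element $\bar f\in A$ is thereby realized as a function $E\to k$.

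The key step is to translate the hypothesis into an algebraic relation. Fix $f\in\int(E,V)$ with class $\bar f\in A$. By hypothesis the function $x\mapsto f(x)+\mm$ takes only finitely many values $c_1,\dots,c_r\in k$; choosing a lift $\tilde c_i\in V$ of each $c_i$, one checks that $\prod_{i=1}^r(f-\tilde c_i)$ lies in $\int(E,\mm)$, i.e.\ that $\prod_{i=1}^r(\bar f-c_i)=0$ in $A$. Thus every element of $A$ is annihilated by a polynomial over $k$ that splits into distinct linear factors. Now let $\PP$ be a prime of $\int(E,V)$ containing $\int(E,\mm)$ and let $\pp$ be the corresponding prime of $A$. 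For any $a\in A$ the relation $\prod_i(a-c_i)=0\in\pp$ forces, by primality, some factor $a-c_i\in\pp$, so $a\equiv c_i\pmod{\pp}$. Hence the composite $k\to A\to A/\pp$ is surjective; since $k$ is a field and $A/\pp\neq 0$, it is an isomorphism. Therefore $A/\pp\cong k=V/\mm$ is a field, $\pp$ is maximal, and the residue field of $\PP$ is $V/\mm$.

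I expect the only genuinely delicate point to be the bookkeeping in the key step: lifting the residue-field values $c_i$ to representatives $\tilde c_i\in V$ and verifying that the product $\prod_i(f-\tilde c_i)$ indeed lands in $\int(E,\mm)$, which is where the finiteness hypothesis is consumed. Everything afterwards is formal once the hypothesis has been recast as the vanishing of a split, square-free polynomial relation in $A$, and the passage between primes of $\int(E,V)$ above $\int(E,\mm)$ and primes of $A$ is the standard correspondence for a quotient ring.
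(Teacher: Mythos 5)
Your proposal is correct and is essentially the paper's own argument: the core step in both is that $\prod_{i=1}^r(f-\tilde c_i)\in\int(E,\mm)\subseteq\PP$ for representatives $\tilde c_i$ of the finitely many residue classes of values of $f$, after which primality forces some $f-\tilde c_i\in\PP$, so the residue field is $V/\mm$. The only cosmetic difference is that you phrase this in the quotient ring $A=\int(E,V)/\int(E,\mm)$ embedded in $\prod_{x\in E}V/\mm$, whereas the paper works directly with a set of representatives of $V$ modulo $\mm$.
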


\begin{proof}
Let $\PP$ be a prime ideal of $\int(E,V)$ containing $\int(E,\mm)$. Let $S$ be a set of representatives of $V$ modulo $\mm$. Let $f\in\int(E,V)$ and let $a_1,\cdots,a_r\in S$ be a set of representatives of the residue classes modulo $\mm$ of the values of $f$ on $E$. Then, $\prod_{i=1}^r(f-a_i)$ is in $\int(E,\mm)\subseteq \PP$. Consequently, there exists $i$ such that $f-a_i\in\PP$. Thus, $S$ is also a set of representatives of $\int(E,V)$ modulo $\PP$.
\end{proof}

Now, we are able to describe the spectrum of the ring $\int(E,V)$ when it is a Pr\"ufer domain, and more generally:

\begin{theorem}\label{th:35D}
Assume that $E$ does not contain any pseudo-divergent or pseudo-stationary sequence. Then, the prime ideals of $\int(E,V)$ are the following:
\begin{enumerate}
\item The nonzero prime ideals lying over the ideal $(0)$ of $V$ are in one-to-one correspondence with the monic irreducible polynomials $q$ of $K[X]$. To the polynomial $q$ corresponds the ideal:
$$\PP_q=\{qh\in\int(E,V)\mid h\in K[X]\,\}=qK[X]\cap\int(E,V)\,.$$
\item The prime ideals lying over the maximal ideal $\mm$ of $V$ are maximal with residue field isomorphic to $V/\mm$. They are the ideals of the form
$$\mm_{\Uu}=\{f\in\int(E,V)\mid f^{-1}(\mm)\in\Uu\}$$
where $\Uu$ is any  ultrafilter on $E$.
\end{enumerate}
\noindent Consequently, $\dim(\int(E,V))=2$. 

\end{theorem}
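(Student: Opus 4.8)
The plan is to split $\mathrm{Spec}(\int(E,V))$ according to the contraction to $\mathrm{Spec}(V)$. Since $v$ has rank one, $V$ is a one-dimensional valuation domain with $\mathrm{Spec}(V)=\{(0),\mm\}$, so every prime $\PP$ of $\int(E,V)$ satisfies $\PP\cap V=(0)$ or $\PP\cap V=\mm$, and I would treat these two cases separately. For the primes lying over $(0)$, I would localize at the multiplicative set $T=V\setminus\{0\}$: since $V\subseteq\int(E,V)$, $X\in\int(E,V)$ and $T^{-1}V=K$, one gets $K[X]\subseteq T^{-1}\int(E,V)\subseteq T^{-1}K[X]=K[X]$, hence $T^{-1}\int(E,V)=K[X]$. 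The primes disjoint from $T$ are then in order-preserving bijection with the primes of the PID $K[X]$; the nonzero ones are the $qK[X]$ with $q$ monic irreducible, whose contraction is exactly $\PP_q=qK[X]\cap\int(E,V)$, giving assertion~(1).

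For the primes lying over $\mm$ I would chain the preceding lemmas: the absence of pseudo-divergent sequences gives $v$-orderings (Lemma~\ref{th:31D}); $v$-orderings force every such prime to contain $\int(E,\mm)$ (Lemma~\ref{th:33}); the absence of pseudo-divergent and pseudo-stationary sequences gives the finite-values property (Lemma~\ref{th:33D}); and Frisch's Lemma~\ref{th:34D} then shows every prime containing $\int(E,\mm)$ is maximal with residue field $V/\mm$. This already yields the qualitative part of assertion~(2); it remains to match these maximal ideals with ultrafilters.

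For the correspondence, the easy direction is that $\mm_\Uu$ is prime over $\mm$: since $V/\mm$ is a domain, $f(x)g(x)\in\mm$ iff $f(x)\in\mm$ or $g(x)\in\mm$, so $(fg)^{-1}(\mm)=f^{-1}(\mm)\cup g^{-1}(\mm)$, and the ultrafilter axioms (including $\emptyset\notin\Uu$ for properness and $E\in\Uu$, giving $\int(E,\mm)\subseteq\mm_\Uu$ and $\mm_\Uu\cap V=\mm$) turn this into primeness; Frisch's lemma then upgrades $\mm_\Uu$ to a maximal ideal with residue field $V/\mm$. The hard direction, which I expect to be the main obstacle, is surjectivity: given a prime $\PP$ over $\mm$, I must produce an ultrafilter $\Uu$ with $\PP=\mm_\Uu$. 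Here I would pass to $R=\int(E,V)/\int(E,\mm)$, realized as a subring of $(V/\mm)^E$ via $f\mapsto(f(x)\bmod\mm)_{x\in E}$; by the finite-values property each element of $R$ has finite image, and Lagrange interpolation over the field $V/\mm$ on the (finitely many) level sets shows that $R$ contains the idempotents $\mathbf 1_A$ for every $A$ in the Boolean algebra $B$ generated by these level sets, in particular $\mathbf 1_{f^{-1}(\mm)}\in R$ for each $f$. The residue map $\phi:\int(E,V)\to\int(E,V)/\PP\cong V/\mm$ sends idempotents to $\{0,1\}$, so $\phi$ restricts to a Boolean homomorphism $B\to\{0,1\}$ whose $1$-fibre is an ultrafilter $\Uu_0$ on $B$; extending $\Uu_0$ to an ultrafilter $\Uu$ on $E$ (via the Boolean prime ideal theorem) and using the decomposition $f\equiv f\cdot\mathbf 1_{f^{-1}(\mm)}+f\cdot\mathbf 1_{E\setminus f^{-1}(\mm)}$ together with $\mathbf 1_{E\setminus f^{-1}(\mm)}\in f\,\int(E,V)+\int(E,\mm)$ (again by interpolation), I would check that $f\in\PP\iff\phi(\mathbf 1_{f^{-1}(\mm)})=1\iff f^{-1}(\mm)\in\Uu$, i.e.\ $\PP=\mm_\Uu$; note that $\mm_\Uu$ depends only on $\Uu_0=\Uu\cap B$, so the choice of extension is immaterial.

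Finally, for the dimension I would combine the classification with an explicit chain. Taking $a\in E$ and the principal ultrafilter $\Uu$ at $a$, one has $\PP_{X-a}=\{f:f(a)=0\}\subsetneq\{f:f(a)\in\mm\}=\mm_\Uu$ (the inclusion being strict because a nonzero $t\in\mm$ lies in the right-hand side only), so $(0)\subsetneq\PP_{X-a}\subsetneq\mm_\Uu$ shows $\dim\geq 2$. Conversely, the $\PP_q$ have height one (nothing lies strictly between $(0)$ and $qK[X]$ in the PID $K[X]$) and all primes over $\mm$ are maximal and pairwise incomparable, so no chain exceeds length two; hence $\dim(\int(E,V))=2$.
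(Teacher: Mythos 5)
Your proposal is correct, and its skeleton matches the paper's: assertion (1) is a general fact about primes above $(0)$, the qualitative part of (2) is obtained by chaining Lemmas~\ref{th:31D}, \ref{th:33}, \ref{th:33D} and \ref{th:34D}, and the dimension count comes from the chain $(0)\subsetneq\PP_{X-a}\subsetneq\mm_a$. The difference is that where the paper simply cites its sources, you supply the arguments: for (1) the paper invokes \cite[V.1.2]{bib:CC}, while you give the standard localization proof ($T^{-1}\int(E,V)=K[X]$ for $T=V\setminus\{0\}$, squeezed between $V[X]$ and $K[X]$); and for the ultrafilter description of the primes above $\mm$ the paper cites \cite[Theorem 5.3]{bib:frisch2017} as a black box, whereas you reconstruct that result directly -- using Lemma~\ref{th:33D} to produce, by Lagrange interpolation on the finitely many level sets of each $f$ (the relevant differences $c_i-c_j$ being units of $V$ precisely because the classes are distinct mod $\mm$), the idempotents $\mathbf 1_{f^{-1}(\mm)}$ in $\int(E,V)/\int(E,\mm)$, then reading off an ultrafilter from the induced Boolean homomorphism to $V/\mm$ and closing the loop with the identity $\mathbf 1_{E\setminus f^{-1}(\mm)}\in f\,\int(E,V)+\int(E,\mm)$. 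Your version is self-contained and makes visible exactly where the finite-values hypothesis enters the ultrafilter correspondence; the paper's version is shorter and leans on Frisch's more general theorem on rings of functions. Both are sound, and your explicit remark that $\mm_{\Uu}$ depends only on the trace $\Uu\cap B$ correctly disposes of the non-uniqueness in the choice of extension.
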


\begin{proof}
Assertion (1) is a general result without any hypothesis~\cite[V.1.2]{bib:CC}. The first part of assertion (2) is a straightforward consequence of Lemmas~\ref{th:31D}, \ref{th:33}, \ref{th:33D} and \ref{th:34D}. The second part is a particular case of \cite[Theorem 5.3]{bib:frisch2017}.
Finally, note that, for each $a\in E$, the ideal
$$\mm_a=\{f\in\int(E,V)\mid f(a)\in\mm\}$$
which corresponds  to the trivial ultrafilter associated to $a$ is a maximal ideal, and the last assertion follows from the obvious fact that $\PP_{X-a}\subseteq \mm_a$. 
\end{proof}

Among the maximal ideals lying over $\mm$, we just mentioned ideals of the form $\mm_a=\{f\in\int(E,V)\mid f(a)\in\mm\}$ where $a$ is any element of $E$. Classically, there are other ideals of this type, those associated to the elements of the completion of $E:$ let $\hV$, $\hm$, and  $\hE$ denote the completions of $V$, $\mm$, and $E$ for the topology associated to the valuation. Thanks to the continuity of the integer-valued polynomials~\cite{bib:CCL}, we have the containment: $$\int(E,V)\subseteq\int(\hE,\hV)\,.$$ 
Therefore,
$$\forall x\in\hE \quad\quad  \mm_x=\{f\in\int(E,V)\mid f(x)\in\hm\}$$ 
is a maximal ideal of $\int(E,V)$. There is another way to characterize this ideal $\mm_x:$ if $\{x_n\}_{n\geq 0}$ is a sequence of elements of $E$ with limit $x$, then $$\mm_x=\{f\in\int(E,V)\mid f(x_n)\in\mm \textrm{ for almost all } n\,\}\,.$$
This is the ideal associated to the filter formed by the cofinite subsets of the set $\{x_n\mid n\geq 0\}$. 
That suggest the idea of other ideals lying over $\mm:$ if $\{x_n\}_{n\geq 0}$ is  an infinite sequence of distinct elements of $E$, we may consider the ideal
$$\mm_{\{x_n\}}=\{f\in\int(E,V)\mid f(x_n)\in\mm \textrm{ for almost all } n\,\}.$$ But, this ideal of $\int(E,V)$ is not necessarily prime: it will be the case if the fact that $f(x_n)\in \mm$ for infinitely many $n$ implies that $f(x_n)\in\mm$ for almost all $n$. Here is such an example:
let $\{x_n\}_{n\geq 0}$ be a pseudo-convergent sequence, that is, a sequence such that the sequence $\{v(x_{n+1})-v(x_n)\}_{n\geq 0}$ is strictly increasing. Clearly, a subsequence of a pseudo-convergent sequence is still pseudo-convergent. As a consequence:

\begin{proposition}
If $\{x_n\}_{n\geq 0}$ is a pseudo-convergent sequence of elements of $E$, then the following ideal is a maximal ideal of $\int(E,V)$ lying over $\mm:$
$$\mm_{\{x_n\}}=\{f\in\int(E,V)\mid f(x_n)\in\mm \textrm{ for almost all } n\,\}.$$	
\end{proposition}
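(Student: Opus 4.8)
The plan is to prove the stronger statement that the residue ring $\int(E,V)/\mm_{\{x_n\}}$ is isomorphic to the field $V/\mm$; this yields at once that $\mm_{\{x_n\}}$ is maximal, lying over $\mm$, with residue field $V/\mm$, and \emph{a fortiori} prime (which, as the discussion preceding the statement stresses, is the delicate point). That $\mm_{\{x_n\}}$ is a proper ideal lying over $\mm$ is routine: closure under addition uses that an intersection of two cofinite sets is cofinite, closure under multiplication by an arbitrary $g\in\int(E,V)$ uses that $\mm$ is an ideal of $V$, and for a constant $c\in V$ one has $c\in\mm_{\{x_n\}}$ if and only if $c\in\mm$, so $\mm_{\{x_n\}}\cap V=\mm$ and $1\notin\mm_{\{x_n\}}$. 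The whole content is therefore the stabilization statement: for every $f\in\int(E,V)$ the sequence $\{f(x_n)\bmod\mm\}_{n\geq 0}$ is eventually constant, say equal to $s_f\in V/\mm$. Granting this, $f\mapsto s_f$ is a ring homomorphism $\int(E,V)\to V/\mm$, surjective through the constants, with kernel exactly $\mm_{\{x_n\}}$, and the proof is complete.

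The first ingredient is that the underlying set $E_0=\{x_n\mid n\geq 0\}$ contains no pseudo-divergent and no pseudo-stationary sequence, so that Lemma~\ref{th:33D} applies to it. Writing $\gamma_m=v(x_{m+1}-x_m)$, pseudo-convergence gives $\gamma_0<\gamma_1<\cdots$ and $v(x_i-x_j)=\gamma_{\min(i,j)}$ for $i\neq j$, and since the $x_i$ lie in $V$ we have $\gamma_m\geq 0$. For any sequence of distinct points $x_{n_0},x_{n_1},\ldots$ extracted from $E_0$, the valuations of consecutive differences are $\gamma_{\min(n_{k-1},n_k)}$; the integers $\min(n_{k-1},n_k)$ can neither strictly decrease indefinitely (they are non-negative) nor remain constant (that would force one index to recur, contrary to distinctness), so such a sequence is neither pseudo-divergent nor pseudo-stationary. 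Consequently, by Lemma~\ref{th:33D} applied to $E_0$, every $f\in\int(E,V)\subseteq\int(E_0,V)$ takes only finitely many values modulo $\mm$ on $\{x_n\}$.

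The second, and main, ingredient is that for a pseudo-convergent sequence and any $h\in K[X]$ the sequence $\{v(h(x_n))\}_{n\geq 0}$ is eventually monotone. This is classical; I would prove it by extending $v$ to $\overline{K}$, factoring $h=c\prod_i(X-a_i)$ there, and checking that each $\{v(x_n-a_i)\}_n$ is eventually constant or eventually strictly increasing: with $\beta_n=v(x_n-a_i)$, the relation $x_{n+1}-a_i=(x_{n+1}-x_n)+(x_n-a_i)$ together with $\gamma_n$ strictly increasing forces $\beta_n$ to become constant as soon as $\beta_n\neq\gamma_n$, the only non-stationary possibility being $\beta_n=\gamma_n$ for all $n$. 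Summing over $i$, the sequence $\{v(h(x_n))\}_n$ is eventually non-decreasing. Applying this to $h=f-s$ for a representative $s\in V$ of a residue class, the set $\{n\mid f(x_n)\equiv s\pmod{\mm}\}=\{n\mid v((f-s)(x_n))>0\}$ is finite or cofinite.

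Combining the two ingredients finishes the argument. By the first, only finitely many residue classes $s_1,\dots,s_r$ are attained by $\{f(x_n)\bmod\mm\}$; the index sets $A_i=\{n\mid f(x_n)\equiv s_i\}$ partition the indices, and by the second each $A_i$ is finite or cofinite. A finite partition of an infinite set into finite-or-cofinite pieces has exactly one cofinite piece, so $f(x_n)\bmod\mm$ equals a fixed $s_i$ for almost all $n$, which is the required stabilization. I expect the main obstacle to be precisely the eventual monotonicity of $\{v(h(x_n))\}_n$: when $\{x_n\}$ is a Cauchy sequence ($\gamma_m\to+\infty$) one can bound the divided differences $\frac{f(x_i)-f(x_j)}{x_i-x_j}$ from below and argue elementarily that $\{n\mid f(x_n)\in\mm\}$ is finite or cofinite, but when $\{x_n\}$ is pseudo-convergent without being Cauchy (the $\gamma_m$ bounded) such bounds no longer force stabilization, and one genuinely needs the behaviour of $v$ on the linear factors over $\overline{K}$ to rule out oscillation of the residues.
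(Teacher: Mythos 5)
Your proof is correct, and it actually establishes more than the statement asks for (it identifies the residue field as $V/\mm$), but it follows a genuinely different route from the paper's. The paper treats the proposition as an immediate consequence of the discussion preceding it: primeness of $\mm_{\{x_n\}}$ reduces to the implication ``$f(x_n)\in\mm$ for infinitely many $n$ implies $f(x_n)\in\mm$ for almost all $n$'', which follows from Lemma~\ref{lem:310} (the sequence $\{f(x_n)\}_{n\geq n_0}$ is pseudo-convergent) combined with Lemma~\ref{lem:37} (its valuations are eventually constant or strictly increasing); maximality and the residue field then come from the description of the primes lying over $\mm$ (Lemma~\ref{th:34D} and Theorem~\ref{th:35D}), which rests on the standing hypothesis that $E$ contains no pseudo-divergent or pseudo-stationary sequence. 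You instead prove directly that the residues $f(x_n)\bmod\mm$ stabilize, by (i) checking that the set $\{x_n\mid n\geq 0\}$ itself contains no pseudo-divergent or pseudo-stationary sequence, so that Lemma~\ref{th:33D} applies to it and gives finitely many residue classes, and (ii) re-proving, via factorization over $\overline{K}$, the eventual monotonicity of $\{v(h(x_n))\}_n$ --- which is precisely a weak form of the content of Lemmas~\ref{lem:310} and~\ref{lem:37}, so nothing new is gained there. What your argument buys is self-containedness on the maximality side: it does not invoke the spectrum description and needs no hypothesis on $E$ beyond the pseudo-convergence of $\{x_n\}$ itself. Note also that quoting Lemma~\ref{lem:310} at full strength would let you dispense with ingredient (i): pseudo-convergence of $\{f(x_n)\}_{n\geq n_0}$ gives $v(f(x_n)-f(x_m))=\delta_{\min(n,m)}$ with $\delta$ strictly increasing and non-negative, hence $f(x_n)\equiv f(x_m)\pmod{\mm}$ for all sufficiently large $n,m$ directly.
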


\begin{definition}\cite{bib:ostrowski}
An element $x$ is said to be a  {\em pseudo-limit} of a pseudo-convergent sequence $\{x_n\}_{n\geq 0}$ if the sequence $\{v(x-x_n)\}_{n\geq 0}$ is strictly increasing.
\end{definition}


In fact, we have the following identity between maximal ideals:   

\begin{proposition}\label{th:38u}
If $x$ is a pseudo-limit of a pseudo-convergent sequence $\{x_n\}_{n\geq 0}$, then $\mm_x=\mm_{\{x_n\}}$. 	
\end{proposition}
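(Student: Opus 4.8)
The plan is to reduce the equality of ideals to a single pointwise statement about the valuation. Since, by their definitions, $\mm_x=\{f\in\int(E,V)\mid f(x)\in\hm\}=\{f\mid v(f(x))>0\}$ and $\mm_{\{x_n\}}=\{f\mid v(f(x_n))>0\textrm{ for almost all }n\}$, proving the proposition amounts to showing that for every $f\in\int(E,V)$
$$v(f(x))>0\quad\Longleftrightarrow\quad v(f(x_n))>0\ \textrm{ for almost all }n,$$
a statement I will refer to as $(\star)$. First I would record the metric data. Writing $\gamma_n=v(x_{n+1}-x_n)$, pseudo-convergence says $\{\gamma_n\}$ is strictly increasing; the pseudo-limit hypothesis says $v(x-x_n)$ is strictly increasing, and combining it with $x_{n+1}-x_n=(x-x_n)-(x-x_{n+1})$ and the ultrametric inequality gives $v(x-x_n)=\gamma_n$. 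Set $\gamma=\sup_n\gamma_n\in\SR\cup\{+\infty\}$; strict monotonicity forces $\gamma_n<\gamma$ for every $n$.

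The heart of the argument is a root-by-root comparison of the valuations of $f(x_n)$ and $f(x)$. I would pass to an algebraically closed valued field extending $(\hK,v)$ that contains $x$ and the roots of $f$, and factor $f(X)=c\prod_i(X-\alpha_i)$ there. For each root, $x_n-\alpha_i=(x-\alpha_i)-(x-x_n)$ with $v(x-x_n)=\gamma_n\nearrow\gamma$, so the roots split into two families: those with $v(x-\alpha_i)<\gamma$, for which $v(x_n-\alpha_i)=v(x-\alpha_i)$ as soon as $\gamma_n>v(x-\alpha_i)$, and those with $v(x-\alpha_i)\geq\gamma$, for which $v(x_n-\alpha_i)=\gamma_n$ for all $n$. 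Summing, there is a constant $C$ depending only on $f$ and an integer $k=\#\{\,i\mid v(x-\alpha_i)\geq\gamma\,\}$ so that, for $n$ large, $v(f(x_n))=C+k\gamma_n$, whereas $v(f(x))=C+\sum_{v(x-\alpha_i)\geq\gamma}v(x-\alpha_i)\geq C+k\gamma$.

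It then remains to compare these quantities across the threshold $0$. If $k=0$, the computation gives $v(f(x_n))=C=v(f(x))$ for $n$ large, and $(\star)$ is immediate in both directions. If $k\geq1$, I would invoke the only genuinely arithmetic input: since $x_n\in E$ and $f\in\int(E,V)$, we have $v(f(x_n))\geq0$. As $v(f(x_n))=C+k\gamma_n$ with $\gamma_n<\gamma$ strictly, letting $n\to\infty$ forces $C+k\gamma>0$ (the value $C+k\gamma=0$ would make $v(f(x_n))=C+k\gamma_n<0$). Hence $v(f(x_n))=C+k\gamma_n$ is eventually positive, while $v(f(x))\geq C+k\gamma>0$; thus both sides of $(\star)$ hold. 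The Cauchy subcase $\gamma=+\infty$ sits inside this last case, with $k$ equal to the multiplicity of $x$ as a root of $f$ and $v(f(x))=+\infty$.

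I expect the main obstacle to be exactly the case $k\geq1$ with $\gamma<+\infty$, that is, the genuinely non-Cauchy pseudo-convergent situation, where $v(f(x_n))$ need \emph{not} converge to $v(f(x))$ because roots $\alpha_i$ with $v(x-\alpha_i)>\gamma$ open a gap between the two valuations. The key realization is that one should not aim for equality of the two valuations but only for agreement of their sign, and that the integer-valuedness of $f$ on $E$ is precisely what excludes the borderline value $C+k\gamma=0$ and thereby forces both memberships to hold simultaneously.
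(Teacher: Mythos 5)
Your argument is correct, but it takes a genuinely different route from the paper's. The paper first observes that both $\mm_x$ and $\mm_{\{x_n\}}$ are already known to be maximal, so a single inclusion $\mm_{\{x_n\}}\subseteq\mm_x$ suffices; that inclusion is then obtained by citing Kaplansky's lemmas (the sequence $\{f(x_n)\}$ is eventually pseudo-convergent, hence $v(f(x_n))\geq v(f(x_{n_1}))>0$ for $n\geq n_1$) and applying the polynomial-closure lemma to $t^{-1}f$ on the tail $F=\{x_n\}_{n\geq n_1}$, where $v(t)=v(f(x_{n_1}))$. You instead prove the set equality directly, with no appeal to maximality, by factoring $f$ over an algebraically closed valued extension and comparing the signs of $v(f(x))$ and of the eventual value $v(f(x_n))=C+k\gamma_n$; your root-by-root computation is in effect a self-contained re-derivation of both Kaplansky's Lemma 5 and the polynomial-closure fact (Lemma~\ref{lembelow}), and it isolates exactly where integer-valuedness enters, namely to exclude $C+k\gamma\leq 0$ when $k\geq 1$ (your parenthetical treats only the case $C+k\gamma=0$, but the same monotonicity argument rules out $C+k\gamma<0$). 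What your approach buys is independence from the earlier maximality results and an explicit quantitative picture of the two valuations; what the paper's approach buys is brevity, since it delegates all the valuation-theoretic work to the cited lemmas. One point worth making explicit in your write-up: the extension of $v$ to an algebraically closed field containing $x$ and the roots of $f$ remains of rank one (its value group lies in the divisible hull of $v(K^*)\subseteq\SR$), which is what legitimates taking $\gamma=\sup_n\gamma_n$ in $\SR\cup\{+\infty\}$ and passing to the limit.
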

In order to prove this equality, recall:

\begin{lemma}\cite{bib:ostrowski} or \cite[Lemma 1]{bib:kaplansky}.\label{lem:37}
If $\{x_n\}_{n\geq 0}$ is a pseudo-convergent sequence, then one of the following assertions holds:
\begin{enumerate}
\item $\forall n\quad v(x_{n+1})>v(x_n),$
\item  $\exists n_0 \;\forall n\geq n_0\quad v(x_{n+1})=v(x_n).$
\end{enumerate}
\end{lemma}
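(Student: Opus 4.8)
The plan is to reduce the statement about the valuations of the terms $x_n$ to the behaviour of the valuations of the consecutive differences, which are forced to be strictly increasing by the very definition of pseudo-convergence. Setting $\gamma_n=v(x_{n+1}-x_n)$ and applying the defining inequality of a pseudo-convergent sequence to the triple of indices $n+1>n>n-1$ gives $v(x_{n+1}-x_n)>v(x_n-x_{n-1})$, that is, $\gamma_{n-1}<\gamma_n$ for every $n\geq 1$; hence $(\gamma_n)_{n\geq 0}$ is strictly increasing. The entire argument then rests on the strong triangle inequality applied to the identity $x_{n+1}=x_n+(x_{n+1}-x_n)$, which yields $v(x_{n+1})=\min(v(x_n),\gamma_n)$ whenever $v(x_n)\neq\gamma_n$, and only $v(x_{n+1})\geq\gamma_n$ in the boundary case $v(x_n)=\gamma_n$.

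First I would record a stability lemma: if $v(x_n)<\gamma_n$ holds for some index $n$, then $v(x_m)=v(x_n)$ for all $m\geq n$. This follows by induction, the point being that $v(x_n)<\gamma_n\leq\gamma_m$ (as $\gamma$ is increasing), so at each step the strict inequality $v(x_m)<\gamma_m$ persists and the strong triangle inequality forces $v(x_{m+1})=v(x_m)$. Thus a single occurrence of $v(x_n)<\gamma_n$ already produces the eventual-equality alternative~(2). Next I would dispose of the opposite strict inequality: if $v(x_n)>\gamma_n$ for some $n$, then the strong triangle inequality gives $v(x_{n+1})=\gamma_n$, and since $\gamma_n<\gamma_{n+1}$ we land in the situation $v(x_{n+1})<\gamma_{n+1}$, to which the stability lemma applies from index $n+1$. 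So this case again yields alternative~(2). Combining the two, whenever $v(x_n)\neq\gamma_n$ for at least one $n$, alternative~(2) holds.

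The only remaining possibility is that $v(x_n)=\gamma_n$ for every $n$. In that case $v(x_n)=\gamma_n$ inherits the strict monotonicity of $\gamma$, so $v(x_{n+1})>v(x_n)$ for all $n$, which is alternative~(1). This completes the dichotomy.

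The main obstacle is precisely the boundary case $v(x_n)=\gamma_n$, where the ultrametric inequality is not sharp and $v(x_{n+1})$ is left undetermined. The resolution is to observe that either this equality holds at every index, giving the strictly increasing alternative~(1), or it fails somewhere; and any failure—of either sign—propagates in one step to a strict inequality $v(x_m)<\gamma_m$ that then perpetuates itself and pins the valuations down to a constant value, giving alternative~(2).
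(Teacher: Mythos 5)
Your proof is correct. Note that the paper does not actually prove this statement: it is quoted with attribution to Ostrowski and to Kaplansky's Lemma~1, so there is no in-paper argument to compare against. Your argument is essentially the classical one from those sources: the defining inequality of pseudo-convergence applied to consecutive triples makes $\gamma_n=v(x_{n+1}-x_n)$ strictly increasing, the ultrametric equality $v(x_{n+1})=\min(v(x_n),\gamma_n)$ for $v(x_n)\neq\gamma_n$ handles the non-boundary cases, and your stability observation (a single occurrence of $v(x_n)<\gamma_n$ freezes the valuation forever, and an occurrence of $v(x_n)>\gamma_n$ produces one at the next index) correctly shows that any failure of $v(x_n)=\gamma_n$ yields alternative~(2), while its holding at every index yields alternative~(1). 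The case analysis is exhaustive and each step is justified; no gaps.
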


\begin{lemma}\cite{bib:ostrowski} or \cite[Lemma 5]{bib:kaplansky}.\label{lem:310}
If $\{x_n\}_{n\geq 0}$ is a pseudo-convergent sequence then, for every $f(X)\in K[X]$, there exists $n_0$ such that the sequence $\{f(x_n)\}_{n\geq n_0}$ is pseudo-convergent.
\end{lemma}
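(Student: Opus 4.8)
The plan is to show that the image sequence inherits the ``gauge'' structure that forces pseudo-convergence; throughout I may assume $f$ nonconstant, since a constant polynomial contributes nothing to check. The starting observation is the gauge property of $\{x_n\}$: writing $\gamma_n=v(x_{n+1}-x_n)$, the pseudo-convergence inequalities give, for all $\mu>\nu$, the equality $v(x_\mu-x_\nu)=\gamma_\nu$ (the valuation of a difference depends only on the smaller index), and $\gamma_\nu$ is strictly increasing. Since a sequence $\{y_n\}$ is pseudo-convergent exactly when, for $\mu>\nu$, the quantity $v(y_\mu-y_\nu)$ depends only on $\nu$ and is, as a function of $\nu$, strictly increasing, it suffices to prove that
$$\delta_\nu:=v\bigl(f(x_\mu)-f(x_\nu)\bigr)\quad\text{is independent of }\mu>\nu\text{ for large }\nu\text{ and strictly increasing in }\nu.$$

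First I would expand by Taylor's formula at $x_\nu$: for $\mu>\nu$,
$$f(x_\mu)-f(x_\nu)=\sum_{i=1}^{d}b_i(x_\nu)\,(x_\mu-x_\nu)^i,\qquad b_i=\frac{f^{(i)}}{i!}\in K[X].$$
Because $v(x_\mu-x_\nu)=\gamma_\nu$, the $i$-th summand has valuation $v(b_i(x_\nu))+i\gamma_\nu$, which is \emph{already independent of $\mu$}. Hence the only way $\delta_\nu$ can depend on $\mu$ is through cancellation among summands of equal, minimal valuation.

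The next step is to control the coefficient valuations $v(b_i(x_\nu))$. I would record the elementary sub-lemma that for any $\alpha$ in an algebraic closure $\overline{K}$ (with the valuation extended to a subgroup of $(\SR,+)$), the sequence $\{v(x_n-\alpha)\}_n$ is eventually monotone non-decreasing: a short ultrametric computation on $x_{n+1}-\alpha=(x_{n+1}-x_n)+(x_n-\alpha)$ using $v(x_{n+1}-x_n)=\gamma_n\nearrow$ shows it is either eventually constant or eventually strictly increasing. Factoring each $b_i$ over $\overline{K}$ then yields that every $v(b_i(x_n))$ is eventually monotone non-decreasing; equivalently, since $\deg b_i<d$, the same follows by induction on the degree together with Lemma~\ref{lem:37} applied to the (inductively) pseudo-convergent sequence $\{b_i(x_n)\}$. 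Consequently each $v(b_i(x_n))+i\gamma_n$ is strictly increasing, so $\delta_\nu$ is bounded below by $m(\nu):=\min_{1\le i\le d}\bigl(v(b_i(x_\nu))+i\gamma_\nu\bigr)$ and equals it whenever this minimum is uniquely attained.

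The crux, and the step I expect to be the main obstacle, is to guarantee for $\nu$ large that the minimum $m(\nu)$ is attained at a single index, for then $\delta_\nu=m(\nu)$ is $\mu$-independent and, since $\gamma_\nu$ is strictly increasing while the $v(b_i(x_\nu))$ are non-decreasing, $\delta_\nu$ is strictly increasing, completing the proof. To analyze the minimum I would group the roots of each $f^{(i)}$ into those the sequence tracks (where eventually $v(x_n-\alpha)=\gamma_n$) and those it avoids (where $v(x_n-\alpha)$ is eventually constant); then $v(b_i(x_n))$ is eventually affine in $\gamma_n$, say $A_i+n_i\gamma_n$, and the competition among the $d$ terms is governed by a Newton polygon in $\gamma_n$ with slopes $n_i+i$ and intercepts $A_i$. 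The genuinely hard case is when $m(\nu)$ is attained on a set $I$ of two or more indices for all large $\nu$ — for instance when $\{x_n\}$ pseudo-converges to a root of $f$ of multiplicity $\ge 2$. Writing $i_0=\min I$ and factoring out $(x_\mu-x_\nu)^{i_0}$, the surviving sum $\sum_{i\in I}b_i(x_\nu)(x_\mu-x_\nu)^{i-i_0}$ has all its terms of equal valuation, so a further, possibly $\mu$-dependent, cancellation could occur; one must show that after enlarging $n_0$ this cannot persist in a way that spoils $\mu$-independence or strict monotonicity. This delicate bookkeeping is precisely the content of the Ostrowski and Kaplansky analyses cited, and is where the real work sits.
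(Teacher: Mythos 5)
First, a point of calibration: the paper does not prove this lemma at all --- it is quoted verbatim from Ostrowski and Kaplansky (\cite[Lemma 5]{bib:kaplansky}) --- so your attempt must be judged as a standalone proof, and as such it is incomplete. Your setup is correct and standard (the gauge property $v(x_\mu-x_\nu)=\gamma_\nu$ for $\mu>\nu$, the Taylor expansion, the root dichotomy over $\overline{K}$, and the eventual affine form $v(b_i(x_n))=A_i+n_i\gamma_n$), but at the decisive point you write that the tied-minimum case ``is precisely the content of the Ostrowski and Kaplansky analyses cited, and is where the real work sits'' --- i.e., you defer the crux to the very references the lemma is quoted from. Worse, the strategy you announce for closing it --- ``guarantee for $\nu$ large that the minimum $m(\nu)$ is attained at a single index'' --- is unachievable in general. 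Take $f(X)=X^2$ with residue characteristic $\neq 2$ and $v(2)=0$, and let $\{x_n\}$ pseudo-converge with tracked pseudo-limit $0$, i.e.\ $v(x_n)=\gamma_n$. Then the two Taylor terms $2x_\nu(x_\mu-x_\nu)$ and $(x_\mu-x_\nu)^2$ both have valuation $2\gamma_\nu$ for \emph{every} $\nu$: the tie persists no matter how far you enlarge $n_0$. The lemma is nevertheless true there, with $v(f(x_\mu)-f(x_\nu))=2\gamma_\nu$, so the conclusion in the tied case holds for a reason your Newton-polygon bookkeeping does not capture.

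The missing idea that disposes of exactly such cases is to apply your own ``eventually affine'' observation to $f$ itself rather than only to the Taylor coefficients $b_i$: eventually $v(f(x_n))=A+e\gamma_n$, where $e$ counts the tracked roots of $f$ with multiplicity. If $e\geq 1$, then $v(f(x_n))$ is eventually strictly increasing, and the ultrametric equality gives at once $v(f(x_\mu)-f(x_\nu))=v(f(x_\nu))=A+e\gamma_\nu$ for large $\mu>\nu$ --- independent of $\mu$ and strictly increasing, with no term-by-term analysis at all; this covers in particular your flagged hard case of pseudo-convergence to a multiple root, and the $X^2$ example above. The residual case $e=0$ (no tracked root of $f$) then still requires genuine work --- ties can persist among untracked roots at equal distance from the sequence, so even there ``unique minimal term'' is not free --- and that analysis is the actual content of Kaplansky's Lemmas 4 and 5, which your proposal does not reconstruct. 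A minor further repair: defining $b_i=f^{(i)}/i!$ is illegitimate if $K$ has positive characteristic; one should instead take the $b_i$ to be the coefficients in $f(X+Y)=\sum_i b_i(X)Y^i$, which are polynomials over $K$ in every characteristic.
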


\begin{proof}[Proof of Proposition~\ref{th:38u}]
Since the ideals are maximal, it is enough to prove the containment $\mm_{\{x_n\}}\subseteq \mm_x$.	
Let $f\in\mm_{\{x_n\}}$. There exists $n_1$ such that, for $n\geq n_1$, $f(x_n)$ is pseudo-convergent by Lemma~\ref{lem:310}, $v(f(x_n))\geq v(f(x_{n_1}))$ by Lemma~\ref{lem:37},  and $f(x_n\}\in\mm$ by definition of the ideal $\mm_{\{x_n\}}$. Let $t\in V$ be such that $v(t)=v(f(x_{n_1}))$. Then, $v(t)>0$ and, for $n\geq n_1$, $\frac{1}{t}f(x_n)\in V$. Let $F=\{x_n\}_{n\geq n_1}$, then $\frac{1}{t}f(X)\in\int(F,V)$. Lemma~\ref{lembelow} below implies that $\frac{1}{t}f(x)\in V$, and hence, $f(x)\in tV\subseteq \mm$, that is, $f\in\mm_x$. 
\end{proof}

\begin{lemma}\cite[Theorem 9.2]{bib:fez} or \cite[Proposition 4.8]{bib:chabert2010}.\label{lembelow}
Every pseudo-limit $x$ of a pseudo-convergent sequence contained in $E$ belongs to the polynomial closure of $E$, that is, if $f(E)\subseteq V$, then $f(x)\in V$. 
\end{lemma}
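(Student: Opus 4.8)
The plan is to transfer the lower bound $v(f(x_n))\ge 0$ from the terms of the sequence to the pseudo-limit, using only the two structural facts already recalled. Since $x_n\in E$ and $f(E)\subseteq V$ we have $f(x_n)\in V$, i.e. $v(f(x_n))\ge 0$ for all $n$, and the goal is $v(f(x))\ge 0$. First I would normalise: by Lemma~\ref{lem:310} there is an index $n_0$ with $\{f(x_n)\}_{n\ge n_0}$ pseudo-convergent; replacing $\{x_n\}$ by this tail (a tail of a pseudo-convergent sequence is again pseudo-convergent, and $x$ remains a pseudo-limit since $\{v(x-x_n)\}$ stays strictly increasing), I may assume $\{f(x_n)\}_{n\ge 0}$ itself is pseudo-convergent with all terms in $V$. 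I will use the elementary identities, valid for any pseudo-convergent sequence with pseudo-limit $x$: writing $\delta_n=v(x-x_n)$, one has $v(x_{n+1}-x_n)=\delta_n$ and, more generally, $v(x_m-x_n)=\delta_n$ for all $m>n$, because $x$ and the whole tail $\{x_m\}_{m>n}$ lie at distance strictly greater than $\delta_n$ from one another. In particular $x$ and every $x_m$ with $m>n$ share the same leading residue at level $\delta_n$. (Note also that $\delta_n=v(x_{n+1}-x_n)\ge\min(v(x_{n+1}),v(x_n))\ge 0$, so $x\in V$ automatically, consistent with the polynomial closure being a subset of $V$.)

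The heart of the argument, and the step I expect to be the main obstacle, is to prove that $f(x)$ is itself a pseudo-limit of $\{f(x_n)\}$, equivalently that $v(f(x)-f(x_n))=\mu_n$, where $\mu_n:=v(f(x_{n+1})-f(x_n))$ is the strictly increasing defining sequence of the pseudo-convergent sequence $\{f(x_n)\}$. To this end I would expand $f$ by Taylor's formula about $x_n$,
\[
f(x)-f(x_n)=\sum_{k=1}^{d}\frac{f^{(k)}(x_n)}{k!}\,(x-x_n)^k,\qquad d=\deg f,
\]
and compare it with the analogous expansion of $f(x_m)-f(x_n)$ for $m>n$, whose valuation equals $\mu_n$ by the stabilisation property of pseudo-convergent sequences. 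Since $v(x-x_n)=v(x_m-x_n)=\delta_n$ while $v\bigl((x-x_n)-(x_m-x_n)\bigr)=v(x-x_m)>\delta_n$, the two arguments agree beyond level $\delta_n$. Writing $f(x)-f(x_n)=\bigl(f(x)-f(x_m)\bigr)+\bigl(f(x_m)-f(x_n)\bigr)$ and expanding the first bracket about $x_m$, the uniform lower bound $v(f^{(k)}(y)/k!)\ge C$ for all $y\in V$ (which holds because $v(y)\ge 0$ and $f^{(k)}\in K[X]$) gives $v(f(x)-f(x_m))\ge C+\delta_m$, so that $v(f(x)-f(x_n))=\mu_n$ as soon as $C+\delta_m>\mu_n$. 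When $\{x_n\}$ is Cauchy this is clear, since $\delta_m\to\infty$; the delicate point is the bounded-breadth case, where $\delta_m$ stays bounded and several Taylor terms may share the minimal valuation. There one cannot read off the valuation from the term valuations alone and must argue that the cancellation pattern for $x$ and for the tail $\{x_m\}$ is the same at the leading level. This refined bookkeeping is exactly the content of the cited results of Fez and of \cite{bib:chabert2010}, and it is where the real work lies.

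Once $v(f(x)-f(x_n))=\mu_n$ is established, the conclusion follows at once from Lemma~\ref{lem:37} applied to $\{f(x_n)\}$. If $v(f(x_{n+1}))>v(f(x_n))$ for all $n$, then $\mu_n=v(f(x_{n+1})-f(x_n))=v(f(x_n))\ge 0$, whence $v(f(x))\ge\min\bigl(v(f(x_n)),\mu_n\bigr)=v(f(x_n))\ge 0$. If instead $v(f(x_n))=c$ is eventually constant, then $c\ge 0$ and $\mu_n\ge c$, so again $v(f(x))\ge\min(c,\mu_n)=c\ge 0$. In both cases $f(x)\in V$, which is the assertion.
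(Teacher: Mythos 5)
The paper does not actually prove this lemma: it is quoted from \cite{bib:fez} and \cite{bib:chabert2010}, so there is no internal proof to compare yours with. The outer layers of your argument are sound and standard: passing to a tail so that $\{f(x_n)\}$ is pseudo-convergent via Lemma~\ref{lem:310}, the identities $v(x-x_n)=v(x_m-x_n)=\delta_n$ for $m>n$, and the observation that once one knows $v(f(x)-f(x_n))=v(f(x_{n+1})-f(x_n))$ the conclusion is immediate (you do not even need Lemma~\ref{lem:37} there: $v(f(x))\geq\min\bigl(v(f(x_0)),v(f(x_1)-f(x_0))\bigr)\geq 0$).

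The gap is the one you name yourself, and it is not a removable formality: the claim that $f(x)$ is a pseudo-limit of $\{f(x_n)\}$, i.e.\ $v(f(x)-f(x_n))=\mu_n$, \emph{is} the content of the lemma, and your Taylor estimate does not establish it. The bound $v(f(x)-f(x_m))\geq C+\delta_m$, with $C=\min_k\inf_{y\in V}v\bigl(f^{(k)}(y)/k!\bigr)$, only exceeds $\mu_n$ when $\delta_m\to\infty$ (the Cauchy case, where the lemma is easy) or when $C$ happens to be large; since $f\in K[X]$ may have coefficients of arbitrarily negative valuation and $\{\delta_m\}$ may be bounded above --- which is precisely the interesting situation for a rank-one non-discrete valuation --- the quantity $C+\delta_m$ can remain below $0$, let alone below $\mu_n$, for every $m$. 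The correct argument (Ostrowski; Kaplansky's Lemma~5) isolates, for $n$ large, a single Taylor index $h$ whose term $\frac{f^{(h)}(x_n)}{h!}(y-x_n)^h$ has valuation strictly smaller than that of all the other terms, uniformly for $y$ ranging over the tail of the sequence \emph{and} over its pseudo-limits, which yields $v(f(y)-f(x_n))=v\bigl(\frac{f^{(h)}(x_n)}{h!}\bigr)+h\,\delta_n$ for all such $y$ simultaneously. Deferring exactly this step to the cited references makes the proposal an outline of the known proof rather than a proof; as written, your argument is complete only under the extra hypothesis that $\{x_n\}$ is Cauchy.
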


A pseudo-convergent sequence does not always admit a pseudo-limit in $K$, but:

\begin{proposition}\cite[Theorems 2 and 3]{bib:kaplansky} 
A pseudo-convergent sequence in $K$ always admits a pseudo-limit in some immediate extension	of $K$.
\end{proposition}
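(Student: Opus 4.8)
The plan is to split according to Kaplansky's dichotomy between pseudo-convergent sequences of transcendental and of algebraic type, and in each case to build the pseudo-limit together with the immediate extension by hand. First I would observe that, by Lemma~\ref{lem:310} and Lemma~\ref{lem:37}, for every nonzero $g\in K[X]$ the sequence $\{v(g(x_n))\}$ is eventually either strictly increasing or stationary; I call the sequence of \emph{algebraic type} if some $g$ produces the strictly increasing behaviour, and of \emph{transcendental type} otherwise (consistent with the definition recalled above). Since the gauges $v(x_{n+1}-x_n)$ are finite and strictly increasing, the $x_n$ are pairwise distinct, so only finitely many can be roots of a fixed $g$; hence the eventual value $w(g):=\lim_n v(g(x_n))$ is a well-defined element of $v(K^*)$ whenever it is stationary.

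For the transcendental case I would let $x$ be transcendental over $K$ and set $w(g(x))=w(g)$ for $g\in K[X]\setminus\{0\}$, extending to $K(x)$ multiplicatively. The valuation axioms follow by passing to eventual values: $w(gh)=w(g)+w(h)$ and $w(g+h)\ge\min(w(g),w(h))$ because on a stationary tail $v$ already satisfies these. That $w$ restricts to $v$ on $K$ and takes all its values in $v(K^*)$ shows the value group does not grow; for the residue field I would show that any $g(x)$ with $w(g)=0$ is congruent modulo the maximal ideal to a constant, using that $\{g(x_n)\}$ is eventually pseudo-convergent with constant valuation $0$, so its residues stabilise and lift to $V$. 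Finally $w(x-x_n)=\lim_m v(x_m-x_n)=v(x_{n+1}-x_n)$ is strictly increasing in $n$, so $x$ is a pseudo-limit and $K(x)$ is the required immediate extension.

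For the algebraic case I would take $f$ monic of least degree $d$ for which $\{v(f(x_n))\}$ is eventually strictly increasing; the usual product argument shows $f$ is irreducible, and if $d=1$ the sequence already has a pseudo-limit in $K$. Otherwise I would adjoin a root $x$ of $f$, pass to a splitting field $N$ carrying an extension $w$ of $v$, write $f=\prod_{j=1}^d(X-\theta_j)$, and use $v(f(x_n))=\sum_j w(x_n-\theta_j)$. Immediacy then follows from the minimality of $d$ together with $x$ being a pseudo-limit: every element of $K(x)$ is $g(x)$ with $\deg g<d$, so $v(g(x_n))$ is stationary by minimality, and comparing $g(x)$ with $g(x_n)$ via $g(x)-g(x_n)=(x-x_n)\,h_n(x)$, with $w(x-x_n)$ growing, yields $w(g(x))=\lim_n v(g(x_n))\in v(K^*)$; the residue-field argument of the transcendental case then applies verbatim.

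The step I expect to be the main obstacle is the algebraic case, and specifically the ``closest-root'' stabilisation: showing that among the conjugates $\theta_j$ there is, for all large $n$, a single one that the sequence approaches, so that the increase of $v(f(x_n))$ is carried by one factor $w(x_n-\theta_j)$ alone while the others freeze at the finite mutual distances $w(\theta_i-\theta_j)$. Making this precise requires combining the pseudo-convergence of $\{x_n\}$ with the boundedness of the $w(\theta_i-\theta_j)$ to rule out the closest root oscillating between two conjugates; once this is done, choosing the extension of $v$ to $K(x)$ so that $x$ is the approached root (possible since the roots are $K$-conjugate) makes $w(x-x_n)$ strictly increasing and finishes the construction.
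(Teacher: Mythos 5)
First, a remark on the comparison you asked me to make: the paper does not prove this proposition at all --- it is quoted as Kaplansky's Theorems 2 and 3 --- so the only meaningful benchmark is Kaplansky's original argument, which your sketch follows in outline (the transcendental/algebraic dichotomy). Your transcendental case is correct and is essentially Kaplansky's Theorem~2: the limit formula $w(g(x))=\lim_n v(g(x_n))$ does define a valuation on $K(x)$ with value group $v(K^*)$, the stabilisation of the residues of $\{g(x_n)\}$ does show the residue field does not grow, and $w(x-x_n)=v(x_{n+1}-x_n)$ is strictly increasing, so $x$ is a pseudo-limit.

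The algebraic case contains a genuine gap, and it is not the one you point to. The ``closest-root'' issue is harmless: for any $z$ in a valued extension, either $w(z-x_n)<v(x_{n+1}-x_n)$ for some $n$, in which case $w(z-x_m)$ is constant for $m\ge n$, or $w(z-x_n)=v(x_{n+1}-x_n)$ for all $n$ and $z$ is a pseudo-limit; applying this to each conjugate $\theta_j$ and summing shows that at least one $\theta_j$ is a pseudo-limit, with no oscillation possible. The real gap is in your immediacy step: from $g(x)-g(x_n)=(x-x_n)h_n(x)$ and ``$w(x-x_n)$ growing'' you cannot conclude that $w(g(x)-g(x_n))$ eventually exceeds the constant $c=\lim_n v(g(x_n))$, because the gauges $\gamma_n=v(x_{n+1}-x_n)$ are strictly increasing but need not be cofinal in the value group --- and the non-Cauchy case, where they are bounded, is precisely the case in which the proposition says anything beyond completing $K$. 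The repair is to expand $g(x)-g(x_n)=\sum_{i\ge1}g_i(x_n)(x-x_n)^i$ in Hasse derivatives and invoke the unique-dominant-index argument underlying Lemma~\ref{lem:310}: for large $n$ this valuation equals $v(g(x_{n+1})-g(x_n))$, the gauge of the eventually pseudo-convergent sequence $\{g(x_n)\}$, which is $\ge c$ (being a difference of elements of valuation $c$) and strictly increasing, hence eventually $>c$; only then do you obtain $w(g(x))=c\in v(K^*)$, after which your residue-field argument goes through. This computation is exactly the content of Kaplansky's proof of Theorem~3, which moreover avoids the splitting field altogether by defining $w$ on $K[X]/(f)$ directly through the limit formula and verifying the valuation axioms with the same expansion; your route through an a priori extension of $v$ to the splitting field is workable but does not shorten that verification.
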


Recall that an {\em immediate extension} of $K$ is an extension of $L$ of $K$ endowed with a valuation which is an extension of the valuation $v$ of $K$ with the same group of values and the same residuel field. 

\begin{proposition}\cite[Theorem 4]{bib:kaplansky}
A field with a valuation contains a limit for each of its pseudo-convergent sequences if and only if it is maximally complete.
\end{proposition}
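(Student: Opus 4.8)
The plan is to prove both implications by contraposition, using crucially the preceding Proposition (a pseudo-convergent sequence of $K$ always admits a pseudo-limit in some immediate extension of $K$).

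For the implication ``$K$ maximally complete $\Rightarrow$ every pseudo-convergent sequence of $K$ has a pseudo-limit in $K$'', I would argue contrapositively: suppose a pseudo-convergent sequence $\{x_n\}_{n\geq 0}$ of $K$ has no pseudo-limit in $K$. By the preceding Proposition it has a pseudo-limit $a$ in some immediate extension $L$ of $K$, and since no pseudo-limit lies in $K$ we must have $a\notin K$. Thus $L$ is a \emph{proper} immediate extension of $K$, so $K$ is not maximally complete. This direction requires essentially no extra work once the existence of pseudo-limits in immediate extensions is granted.

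The converse ``every pseudo-convergent sequence of $K$ has a pseudo-limit in $K$ $\Rightarrow$ $K$ maximally complete'' is the substantial direction, and I would again argue by contraposition. Assume $K$ is not maximally complete; fix a proper immediate extension $L$, write $\mm_L$ for the maximal ideal of its valuation ring, and pick $a\in L\setminus K$. The first key step is to show that $\{v(a-x)\mid x\in K\}$ has no largest element. Suppose $v(a-x_0)=\gamma$ were maximal. Since $L$ and $K$ share the same value group, choose $c\in K$ with $v(c)=\gamma$; then $(a-x_0)/c$ is a unit of the valuation ring of $L$, and since $L$ and $K$ share the same residue field, choose $u\in V$ with $(a-x_0)/c\equiv u\pmod{\mm_L}$. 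Then $x_0+cu\in K$ and $v\bigl(a-(x_0+cu)\bigr)>\gamma$, contradicting maximality. Hence $\sigma:=\sup_{x\in K}v(a-x)$ is not attained.

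Given this, I would pick $x_n\in K$ with $v(a-x_n)$ strictly increasing toward $\sigma$. A short ultrametric computation shows $\{x_n\}_{n\geq 0}$ is pseudo-convergent (for $n>m$ one gets $v(x_n-x_m)=v(a-x_m)$, which is strictly increasing in $m$) and that $a$ is a pseudo-limit. Finally I would show no $y\in K$ can be a pseudo-limit: for any pseudo-limit $y$ one has $v(y-x_n)=v(x_{n+1}-x_n)=v(a-x_n)$ for all $n$, whence $v(a-y)\geq v(a-x_n)$ for every $n$ and so $v(a-y)\geq\sigma$; as $y\in K$ this forces $v(a-y)=\sigma$, contradicting non-attainment. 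Thus $\{x_n\}$ is a pseudo-convergent sequence of $K$ with no pseudo-limit in $K$, completing the contrapositive. The main obstacle is precisely the non-attainment lemma of the third paragraph: everything hinges on converting the bare hypotheses ``same value group, same residue field'' into the statement that no element of $K$ realizes the optimal approximation to $a$; once that is secured, the remaining verifications are routine manipulations with the ultrametric inequality.
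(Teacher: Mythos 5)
The paper offers no proof of this proposition: it is quoted verbatim from Kaplansky (Theorem 4 of \emph{Maximal fields with valuations}), so there is nothing internal to compare against. Your argument is correct and is essentially the classical Ostrowski--Kaplansky proof. The easy direction is exactly as you say, granting the preceding proposition on pseudo-limits in immediate extensions. For the substantial direction, your non-attainment lemma (if $v(a-x_0)$ were maximal, then matching the value with some $c\in K$ and the residue of $(a-x_0)/c$ with some $u\in V$ produces $x_0+cu$ with $v(a-x_0-cu)>v(a-x_0)$) is the correct use of ``same value group, same residue field,'' and the subsequent ultrametric computations ($v(x_n-x_m)=v(a-x_m)$ for $n>m$; any pseudo-limit $y\in K$ would satisfy $v(a-y)\geq v(a-x_n)$ for all $n$ and hence attain the supremum) are all sound. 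One remark: your extraction of a \emph{sequence} $\{x_n\}$ with $v(a-x_n)$ strictly increasing and cofinal in $\{v(a-x)\mid x\in K\}$ uses that the value group embeds in $(\SR,+)$, which is the standing rank-one hypothesis of this paper; Kaplansky's theorem as cited holds for arbitrary value groups, where one must instead use pseudo-convergent families indexed by a well-ordered (possibly uncountable) set cofinal in that value set. Within the paper's setting your proof is complete; for the general statement you would need to flag this adjustment.
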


Recall that a valuation domain is said {\em maximally complete} if it does not admit any proper immediate extension and that:

\begin{proposition}\cite[Satz 24]{bib:krull}
Every valuation domain $V$ can be embedded in a maximally complete valuation domain $W$ that is an immediate extension of $V$.
\end{proposition}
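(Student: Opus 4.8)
The plan is to realize $W$ as a maximal element, with respect to inclusion, of the family of all immediate extensions of $V$, obtained via Zorn's lemma. First I would introduce the partially ordered family $\Ff$ whose members are the immediate extensions of $V$, namely the valuation domains containing $V$ whose valuation extends $v$ and whose value group $v(K^*)$ and residue field $V/\mm$ coincide with those of $V$; this is ordered by inclusion of valuation domains with compatible valuations, and $V\in\Ff$ shows $\Ff$ is nonempty.

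The first routine verification is that chains in $\Ff$ admit upper bounds. Given a totally ordered subfamily $\{W_i\}_{i\in I}$, the union $W'=\bigcup_i W_i$ is again a valuation domain, since any two of its elements already lie in a common $W_i$ in which divisibility is decided; the valuation on $W'$ extends $v$, and because the value group and residue field of every $W_i$ already equal those of $V$, the same holds for $W'$. Hence $W'$ is an immediate extension of $V$ and is an upper bound in $\Ff$.

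The delicate point, and the main obstacle, is set-theoretic: a priori $\Ff$ is a proper class, so Zorn's lemma cannot be applied to it directly. To confine the argument to a genuine set I would establish a uniform cardinality bound, namely that every immediate extension of $V$ has cardinality at most a fixed cardinal $\kappa$ depending only on $|v(K^*)|$ and $|V/\mm|$. This rests on the fact, recorded in the preceding propositions, that each element of an immediate extension is either already in $K$ or is a pseudo-limit of a pseudo-convergent sequence from $K$; since every such element is determined by approximating data valued in the fixed value group and residue field, the totality of possible pseudo-limits has size at most $\kappa$. Consequently all immediate extensions of $V$ can be realized, up to $V$-isomorphism, inside one fixed set of size $\le\kappa$, so $\Ff$ may be replaced by an honest set, and Zorn's lemma furnishes a maximal element $W$.

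Finally I would check that this maximal $W$ is maximally complete. Suppose to the contrary that $W$ had a proper immediate extension $W''$. Since value group and residue field transfer transitively along immediate extensions, $W''$ would have the same value group and residue field as $W$, hence as $V$, so $W''$ would be an immediate extension of $V$ strictly larger than $W$, contradicting maximality. Thus $W$ admits no proper immediate extension, i.e.\ $W$ is maximally complete, and by construction it is an immediate extension of $V$. Alternatively, one may bypass transitivity by invoking Kaplansky's criterion: the previous proposition supplies, for each pseudo-convergent sequence, a pseudo-limit inside some immediate extension, and a transfinite iteration of this adjunction, kept within a set by the same cardinality bound, terminates at an immediate extension containing a limit for each of its pseudo-convergent sequences, which is therefore maximally complete.
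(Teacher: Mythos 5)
The paper offers no proof of this statement: it is quoted verbatim from Krull (Satz 24 of \emph{Allgemeine Bewertungstheorie}), so there is no internal argument to compare yours against. Your proposal is the standard route to this result --- Zorn's lemma on the family of immediate extensions, a cardinality bound to make that family a set, and transitivity of ``immediate'' to see that a maximal element is maximally complete --- and the overall architecture (chains closed under unions, maximality implies maximal completeness) is sound.

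The one place where your sketch does not yet constitute a proof is exactly the step you flag as delicate: the uniform cardinality bound. Your justification, that each element of an immediate extension ``is determined by approximating data valued in the fixed value group and residue field,'' is not literally true: a pseudo-convergent sequence in $K$ without a pseudo-limit in $K$ can acquire \emph{many} distinct pseudo-limits in an immediate extension (if $x$ is one, so is $x+y$ for any $y$ of sufficiently large value), so an element is not determined by a sequence it pseudo-converges to, and counting sequences does not immediately count elements. Moreover, for a valuation of rank greater than one --- and the proposition is stated for arbitrary valuation domains --- the relevant pseudo-convergent sequences must be indexed by arbitrary well-ordered sets, not by $\SN$ as in the paper's definition and in the Kaplansky statements you invoke, since an increasing subset of the value group need not have countable cofinality. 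Closing this gap is genuinely nontrivial; it is the content of Gravett's note repairing Krull's original argument, where one bounds $|W|$ by a cardinal of the form $|V/\mm|^{|v(K^*)|}$ via an injective ``development'' of each element of $W$ over $K$. With such a bound supplied (or cited), the rest of your argument goes through as written.
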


Thus, let $W$ be a maximally complete valuation domain that is an immediate extension of $V$. Let us still denote by $v$ the extension of $v$ to $W$ and let $\overline{E}^W$ be the polynomial closure of $E$ in $W$. Still assuming that $E$ does not contain any pseudo-divergent or pseudo-stationary sequence, we know that $\overline{E}^W$ is formed by all the pseudo-limits of the pseudo-convergent sequences of $E$ \cite[Theorem 5.2]{bib:chabert2010}. Clearly, 
$$\int(E,V)\subseteq\int(E,W)=\int(\overline{E}^W,W)\,.$$
For each $z\in\overline{E}^W$, we may consider the maximal ideal $\MM_z$ of $\int(E,W)$ defined by $\MM_z=\{f\in \int(E,W)\mid v(f(z))>0\}$ and the corresponding maximal ideal of $\int(E,V):$
$$\mm_z=\{f\in\int(E,V)\mid v(f(z))>0\}\,.$$
We end this section with a conjecture:

\medskip

\noindent{\bf Conjecture}. {\em All the maximal ideals of $\int(E,V)$ lying over $\mm$ are of the form:
$$\mm_z=\{f\in\int(E,V)\mid v(f(z))>0\} \textrm{ where } z\in\overline{E}^W\,.$$
Equivalently, all the maximal ideals of $\int(E,V)$ lying over $\mm$ are of the form:
$$\mm_{\{x_n\}}=\{f\in\int(E,V)\mid f(x_n)\in\mm \textrm{ for almost all } n\}$$
where $\{x_n\}_{n\geq 0}$ is any pseudo-convergent sequence of elements of $E$.}



\NC
 
 \section{$\int(E,V)$ and the stacked bases property}
 
Recall the following classical result:

\begin{proposition}\cite[Theorem 6]{bib:BKU}\label{th:41D}
A Pr\"ufer domain has the stacked bases property if and only if it has the UCS property	
\end{proposition}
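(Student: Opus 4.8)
The plan is to read Proposition~\ref{th:41D} as an equivalence between a ``global'' statement about all free modules (the stacked bases property) and an ``elementary'' reduction statement (the UCS property), and to prove the two implications separately. First I would phrase the stacked bases property in matrix language: specifying a finitely generated rank-$n$ submodule $N$ of a free module $M\cong D^m$ amounts to specifying the $m\times k$ relation matrix $A$ of a generating set of $N$, and the conclusion of the definition says exactly that $A$ can be reduced, by an automorphism of $M$ together with a change of generators of $N$, to a ``diagonal'' form whose entries are the invertible ideals $\II_1\supseteq\II_2\supseteq\cdots\supseteq\II_n$ carried by the rank-one projective summands $P_j$. The Pr\"ufer hypothesis is what makes this reduction meaningful: over a Pr\"ufer domain every finitely generated ideal is invertible, every finitely generated torsion-free module is projective, and a rank-one projective is isomorphic to an invertible ideal, so the summands $P_j$ and the ideals $\II_j$ are precisely the invertible-ideal data one expects in a Smith-type normal form.

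The first (easy) implication, that the stacked bases property implies the UCS property, I would obtain by applying the stacked bases property to the single small configuration that the UCS property describes: one feeds in the relevant pair $(M,N)$ of bounded rank and reads off the diagonalization, the descending chain $\II_{j+1}\subseteq\II_j$ being automatic from the definition. This direction requires no induction and no Pr\"ufer-specific cancellation beyond the identification of rank-one projectives with invertible ideals.

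The substantive implication is the converse, that the UCS property forces the full stacked bases property, and here I would argue by induction on $m$ (with a secondary reduction on the size of a generating matrix of $N$). The UCS property supplies the elementary two-generator step of a Smith-type reduction: starting from the relation matrix of $N$ in a basis of $M$, I would apply it repeatedly to clear a whole row and column, isolating one invertible-ideal entry $\II_1$ together with a complementary rank-one projective summand $P_1$ satisfying $\II_1P_1\subseteq N$, then split $M=P_1\oplus M'$ and $N=\II_1P_1\oplus N'$ and recurse on the pair $(M',N')$ of strictly smaller rank. The Pr\"ufer hypothesis enters decisively here: it guarantees that the extracted ``entries'' are genuine invertible ideals, that $N'$ remains finitely generated projective of the correct rank, and that the complementary summand splits off, supplying the cancellation that keeps the bookkeeping consistent from one stage to the next.

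I expect the main obstacle to lie precisely in this inductive step. One must show that the purely elementary reduction furnished by the UCS property can be organized into a terminating procedure that clears an entire row and column --- not merely a single pair of entries --- and, crucially, that the ideals it produces come out in \emph{descending} order. Ensuring the chain $\II_{j+1}\subseteq\II_j$ rather than an unordered family of diagonal entries is the delicate point: one must extract the \emph{largest} admissible ideal $\II_1$ first and then argue that this maximal choice is compatible with all the subsequent reductions. This is exactly where the invertibility of finitely generated ideals in a Pr\"ufer domain, combined with a carefully arranged maximality argument, has to be brought to bear.
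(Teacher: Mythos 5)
The paper does not actually prove this proposition: it is quoted from Brewer--Katz--Ullery \cite[Theorem 6]{bib:BKU}, so there is no internal argument to compare yours against. Your overall strategy (an easy specialization one way, a Smith-type splitting-off induction the other way) is the standard one from that source, but as written it leaves the two mechanisms that make it work unarticulated. In the direction ``stacked bases $\Rightarrow$ UCS'' it is not enough to ``read off the diagonalization'': one must observe that $\cont(N)=\sum_j\II_j\cont(P_j)=\sum_j\II_j=\II_1$ (each rank-one projective summand $P_j$ of $D^m$ has unit content, and the chain condition makes $\II_1$ the largest term), so that unit content of $N$ forces $\II_1=D$ and hence $P_1\subseteq N$ is the required summand. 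The descending chain is not a harmless by-product here; it is precisely what makes this direction go through, since $\sum_j\II_j=D$ alone would not force any single $\II_j$ to equal $D$.

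In the converse direction the device you are missing is content normalization, and it dissolves the ``main obstacle'' you flag at the end. Set $\II_1=\cont(N)$; it is finitely generated, hence invertible since $D$ is Pr\"ufer, and $N\subseteq\II_1M$, so $\II_1^{-1}N$ is a finitely generated submodule of $M$ with unit content. A \emph{single} application of the UCS property (no repeated row-and-column clearing --- that picture belongs to the matrix reformulation of Proposition~\ref{th:50D}, not to the UCS property as defined here) yields a rank-one projective $P_1\subseteq\II_1^{-1}N$ with $M=P_1\oplus M'$; then $\II_1P_1\subseteq N$, the projection of $N$ onto $P_1$ equals $\II_1P_1$ because $N\subseteq\II_1M$, and hence $N=\II_1P_1\oplus(N\cap M')$. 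The chain condition is then automatic rather than delicate: $\II_2=\cont(N\cap M')\subseteq\cont(N)=\II_1$ simply because $N\cap M'\subseteq N$, so no separate maximality argument is needed. Finally, your recursion ``on the pair $(M',N')$'' is not literally licensed by the UCS property as stated, since the complement $M'$ is finitely generated projective but need not be free; one must either extend the UCS statement to projective ambient modules or otherwise absorb this, and that bookkeeping is part of what \cite{bib:BKU} actually carries out.
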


To explain what is the UCS property, we need to recall first the notion of content. The {\em content} of a matrix $A$ with coefficients in a ring $R$ is the ideal of $R$ generated by the coefficients of $A$, we denote it by $\cont(A)$. The content of a finitely generated submodule $N$ of a free $R$-module $M$ is the content of the matrix formed by the components of any system of generators of $N$ with respect to any basis of $M$. To say that $N$ has {\em unit content} means that its content is $R$. 

\begin{definition}
A ring $R$ has the {\em unit content summand property} or {\em UCS property} (sometimes called BCS property) if, for every $m$, every finitely generated submodule $N$ of $R^m$ with unit content contains a rank-one projective submodule which is a summand of $R^m$.
 \end{definition}

\begin{theorem}\label{th:43D}
 If $E$ does not contain any pseudo-divergent sequence or any pseudo-stationary sequence, then the ring $\int(E,V)$ has the UCS property. 	
 \end{theorem}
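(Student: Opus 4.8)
The plan is to establish the UCS property directly, exploiting the complete description of the prime spectrum of $R=\int(E,V)$ obtained in Theorem~\ref{th:35D}. Let $N\subseteq R^m$ be finitely generated with unit content, presented as the column module of an $m\times k$ matrix $A$ over $R$; the unit-content hypothesis says precisely that the entries of $A$ generate $R$. What I must produce is a rank-one projective submodule $P\subseteq N$ that is a direct summand of $R^m$, equivalently a rank-one submodule $P$ which at every maximal ideal is a free rank-one direct summand of the localised free module. When $\int(E,V)$ is Pr\"ufer, $N$ is automatically projective, which streamlines the bookkeeping, but the construction below is aimed at the weaker hypothesis of the statement.

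The first step is to translate ``unit content'' into local data along the spectrum. By Theorem~\ref{th:35D} the maximal ideals of $R$ are of two types: those lying over $(0)$, which are certain of the primes $\PP_q$ and are governed entirely by $K[X]$, and those lying over $\mm$, namely the ideals $\mm_{\Uu}$ attached to ultrafilters $\Uu$ on $E$, all sharing the single residue field $V/\mm$. Localising at a maximal ideal $\MM$, the module $N_{\MM}$ still has unit content over the local ring $R_{\MM}$, so some entry of $A$ is a unit there and $N_{\MM}$ contains a vector unimodular over $R_{\MM}$, i.e.\ a local free rank-one summand of $R_{\MM}^m$. The task therefore reduces to gluing these local summands into one global rank-one projective summand sitting inside $N$.

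This gluing is the crux, and the structural obstacle is that $R$ is \emph{not} of finite character, so a naive patching over the maximal ideals cannot converge. My main device for overcoming this is the finiteness supplied by Lemma~\ref{th:33D}: the finitely many entries $a_{ij}$ of $A$ each take only finitely many values modulo $\mm$ on $E$. Hence for each $a_{ij}$ the locus $\{x\in E\mid a_{ij}(x)\notin\mm\}$ is a union of finitely many ``residue-constant'' pieces, so the maximal ideals $\mm_{\Uu}$ that are actually relevant to the fixed module $N$ are sorted into finitely many classes. Because all these maximal ideals carry the \emph{same} residue field $V/\mm$, the local unimodular vectors produced above are mutually compatible across a class, and for this fixed $N$ the ring behaves as though it were of finite character. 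I would then glue the finitely many local free summands by a Chinese-remainder argument on $R/\int(E,\mm)$ and its factors, obtaining a rank-one projective $P\subseteq N$ with unit content, hence a direct summand of $R^m$.

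The part I expect to be genuinely hard is the contribution of the pseudo-convergent sequences in $E$, which is exactly what separates the present statement from the finite-character and precompact cases where the UCS property was already known (Theorem~\ref{th:12A}(2)). These sequences create maximal ideals $\mm_{\{x_n\}}$ not associated with any point of $E$, and for sequences of algebraic type may even prevent $\int(E,V)$ from being Pr\"ufer. To handle them I would pass to a maximally complete immediate extension $W$ of $V$, in which every pseudo-convergent sequence acquires a pseudo-limit; via Proposition~\ref{th:38u} the ideal $\mm_{\{x_n\}}$ is then identified with a point-ideal $\mm_z$ for $z$ in the polynomial closure $\overline{E}^W$, so that the gluing along these ``ideal points'' can be modelled on $\int(\overline{E}^W,W)$ and reduced, piece by piece, to the precompact situation. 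Controlling this reduction uniformly in $N$, and in particular handling the transcendental-type sequences without a finite-character crutch, is where I anticipate the real work to lie.
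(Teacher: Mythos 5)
Your proposal does not reach a proof: the crucial gluing step is left open (you say yourself that ``the real work'' remains), and the route you sketch for it is not the one that works. The paper's argument is not a direct patching of local unimodular vectors; it is a reduction to the \emph{almost local-global} property. One shows that for every nonzero ideal $\II$ of $R=\int(E,V)$ the quotient $R/\II$ is local-global, and then invokes Brewer--Klingler's theorem that almost local-global (Pr\"ufer) domains have the UCS property. The verification splits $\mathrm{Max}(R/\II)$ into two disjoint pieces: the finitely many $\PP_q$ containing $\II$ (so $R/\bigcap\PP_q$ is semi-local, hence local-global) and the maximal ideals containing $\bb=\mm R+\II$, where Lemmas~\ref{th:33D} and~\ref{th:34D} give that \emph{every} prime over $\mm$ is maximal, so $R/\bb$ is zero-dimensional, hence local-global; a lemma of Brewer--Klingler then glues the two pieces. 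The failure of finite character is thereby absorbed entirely by the general theorem ``zero-dimensional $\Rightarrow$ local-global'', which is the machinery your ``Chinese-remainder argument on $R/\int(E,\mm)$'' would need to be but is not: sharing the residue field $V/\mm$ does not by itself make the local unimodular vectors at the infinitely many ideals $\mm_{\Uu}$ compatible.

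A second concrete problem is your treatment of the ideals coming from pseudo-convergent sequences. Identifying all maximal ideals over $\mm$ with point-ideals $\mm_z$ for $z$ in $\overline{E}^W$ is precisely the \emph{Conjecture} stated at the end of Section~3 of the paper, not a proven fact; Proposition~\ref{th:38u} only handles those $\mm_{\Uu}$ that happen to arise from pseudo-convergent sequences with a pseudo-limit. So the reduction ``piece by piece to the precompact situation'' via a maximally complete extension rests on an open statement and is in any case unnecessary: the proof never needs to know \emph{which} ultrafilters occur, only that all the resulting primes are maximal with residue field $V/\mm$, which is exactly what Theorem~\ref{th:35D} (indeed already Lemma~\ref{th:34D}) supplies.
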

 
 \begin{proof}
Once we know the spectrum of $\int(E,V)$ (Theorem~\ref{th:35D}, in fact Lemma~\ref{th:34D} would be enough), the following proof is very similar to that given by Brewer and Klingler in~\cite{bib:BK2}. The fact that $\int(E,V)$ has the UCS property follows from the fact that $\int(E,V)$ is almost local-global \cite{bib:BK1}, which means that every proper factor ring of $\int(E,V)$ is local-global, which itself means that each polynomial in several variables with coefficients in the ring that represents units locally also represents a unit globally (for definitions and properties see~\cite[Chapter V \S 4]{bib:FS}). It is enough to know that a ring which is either zero-dimensional or semi-local is local-global.

Let $\II$ be a nonzero ideal of $\int(E,V)$ to which we associate two other ideals:
$$\aa=\bigcap_{\PP_q\supseteq \II}\,\PP_q \; \textrm{ and } \; \bb =\mm\,\int(E,V)+\II\,.$$
Note first that $\II$ is contained in at most finitely many ideals $\PP_q$. If $\II$ is not contained in some $\PP_q$, then $\dim(\int(E,V)/\II)=0$ and $\int(E,V)/\II$ is local-global.  If $\II$ is contained in some $\PP_q$, then $\int(E,V)/\aa$ is semi-local, and hence local-global, while $\dim(\int(E,V)/\bb)=0$ and $\int(E,V)/\bb$ is local-global. As $$\{\MM\in \textrm{Max}(\int(E,V))\mid \II\subseteq\MM\}= \{\MM\mid \aa\subseteq\MM\}\bigsqcup \{\MM\mid \bb\subseteq\MM\}\,,$$ we may conclude that $\int(E,V)/\II$ is local-global by applying the following lemma to the ring $R=\int(E,V)/\II$.
 \end{proof}

\begin{lemma}\cite[Lemma 2]{bib:BK2}
Let $R$ be a ring with two nonzero ideals $\Aa$ and $\Bb$ such that $\mathrm{Max}(R)=\mathrm{Max}(R/\Aa)\cup \mathrm{Max}(R/\Bb)$ and $\mathrm{Max}(R/\Aa)\cap\mathrm{Max}(R/\Bb)=\emptyset$. If $R/\Aa$ and $R/\Bb$ are local-global rings, then $R$ is local-global.
\end{lemma}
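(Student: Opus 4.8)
The plan is to prove the lemma directly from the definition of the local-global property, using the disjointness hypothesis to split the problem across the two quotients and then recombining the solutions via the Chinese Remainder Theorem. Recall that $R$ is local-global when every polynomial $f(X_1,\dots,X_n)\in R[X_1,\dots,X_n]$ that represents a unit in each localization $R_{\MM}$ (as $\MM$ ranges over the maximal ideals of $R$) already represents a unit in $R$; and recall that an element $u\in R$ is a unit exactly when it lies in no maximal ideal of $R$. So I would fix such an $f$, assumed to represent a unit locally, and aim to produce a global representation $f(r_1,\dots,r_n)\in R^{\times}$.

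First I would record the two purely ring-theoretic consequences of the hypotheses on maximal spectra. The disjointness $\mathrm{Max}(R/\Aa)\cap\mathrm{Max}(R/\Bb)=\emptyset$ says that no maximal ideal of $R$ contains both $\Aa$ and $\Bb$; were $\Aa+\Bb$ proper it would lie inside some maximal ideal, which would then contain both, a contradiction, so $\Aa+\Bb=R$. By the Chinese Remainder Theorem the canonical map $R\to R/\Aa\times R/\Bb$ is therefore surjective. The covering hypothesis $\mathrm{Max}(R)=\mathrm{Max}(R/\Aa)\cup\mathrm{Max}(R/\Bb)$ says that every maximal ideal of $R$ contains $\Aa$ or $\Bb$; combined with the unit criterion above, this yields the key observation that $u\in R$ is a unit if and only if its image in $R/\Aa$ is a unit and its image in $R/\Bb$ is a unit, since a non-unit lies in some maximal ideal, which contains $\Aa$ or $\Bb$, so its image in the corresponding quotient is a non-unit, and the converse is immediate.

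Next I would transport the local hypothesis to each quotient. The maximal ideals of $R/\Aa$ are exactly the $\MM/\Aa$ with $\MM\supseteq\Aa$, and $(R/\Aa)_{\MM/\Aa}\cong R_{\MM}/\Aa R_{\MM}$; since the reduction of a unit is a unit, a representation of a unit by $f$ over $R_{\MM}$ descends to one over this quotient. Hence the image $\bar f$ of $f$ in $(R/\Aa)[X_1,\dots,X_n]$ represents a unit in every localization of $R/\Aa$, and because $R/\Aa$ is local-global there exist classes $\bar r_1,\dots,\bar r_n\in R/\Aa$ with $\bar f(\bar r_1,\dots,\bar r_n)\in(R/\Aa)^{\times}$. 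The same argument applied to $\Bb$ produces $\bar s_1,\dots,\bar s_n\in R/\Bb$ whose corresponding value is a unit of $R/\Bb$.

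Finally I would glue. Using the surjection $R\to R/\Aa\times R/\Bb$ I choose, for each $i$, an element $r_i\in R$ with $r_i\equiv\bar r_i\pmod{\Aa}$ and $r_i\equiv\bar s_i\pmod{\Bb}$. Since the two reductions are ring homomorphisms, $f(r_1,\dots,r_n)$ reduces modulo $\Aa$ to $\bar f(\bar r_1,\dots,\bar r_n)$, a unit of $R/\Aa$, and modulo $\Bb$ to the analogous value, a unit of $R/\Bb$; by the key observation $f(r_1,\dots,r_n)$ is then a unit of $R$, so $f$ represents a unit globally and $R$ is local-global. I do not expect a serious obstacle: the only point needing care is the compatibility of localization with passage to the quotient, namely the identification $(R/\Aa)_{\MM/\Aa}\cong R_{\MM}/\Aa R_{\MM}$ and the verification that it carries a local unit-representation of $f$ to one of $\bar f$, which is routine but must be stated precisely so that the local-global hypotheses on $R/\Aa$ and $R/\Bb$ may legitimately be invoked.
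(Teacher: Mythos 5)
Your proof is correct and complete: the deduction $\Aa+\Bb=R$ from the disjointness hypothesis, the criterion that $u\in R$ is a unit exactly when its images in $R/\Aa$ and $R/\Bb$ are units (using the covering hypothesis), the descent of the local unit-representation via $(R/\Aa)_{\MM/\Aa}\cong R_{\MM}/\Aa R_{\MM}$, and the final Chinese Remainder gluing are all sound, and together they establish the lemma. Note that the paper itself offers no proof of this statement --- it is quoted verbatim from Brewer and Klingler \cite[Lemma 2]{bib:BK2} --- and your CRT-based argument is precisely the natural one that this citation stands in for, so nothing needs to be changed.
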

 
 Lemma~\ref{th:23D} and Proposition~\ref{th:41D} imply then that:
 
 \begin{corollary}
All the rings $\int(E,V)$ which are Pr\"ufer domains have the stacked bases property.	
 \end{corollary}


\section{Global study: the rings $\int(\Oo_K)$}

Let us consider now the classical ring of integer-valued polynomials on $\SZ:$
$$\int(\SZ)=\{f(X)\in\SQ[X]\mid f(\SZ)\subseteq\SZ\}\,$$
and, more generally, for every number field $K$ with ring of integers $\Oo_K$, the two-dimensional Pr\"ufer domain
$$\int(\Oo_K)=\{f(X)\in K[X]\mid f(\Oo_K)\subseteq \Oo_K\}\,.$$
The previous proof given for Theorem~\ref{th:43D} does not work since, for instance, the  ring $\int(\SZ)/(X^2+14)$ is not local-global~\cite[Example 7]{bib:BK2}. We will use the following characterization of the UCS property.

\begin{proposition}\cite{bib:GH}\label{th:50D}
A ring $R$ has the UCS property if and only if, for every matrix $B\in\Mm_{n\times m}(R)$ with unit content, there exists a matrix $C\in\Mm_{m\times l}(R)$ such that the matrix $BC$ has unit content and all its $2\times 2$-minors are zero. 
\end{proposition}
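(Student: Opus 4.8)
The plan is to translate both the UCS property and the matrix condition into the language of submodules of free modules, so that the two become visibly equivalent reformulations of a single structural fact about matrices with vanishing $2\times2$-minors. Throughout, for $B\in\Mm_{n\times m}(R)$ I read its $n$ rows as generators of the submodule $N=\{xB\mid x\in R^n\}$ of $R^m$, so that $\cont(B)=\cont(N)$; and for $C\in\Mm_{m\times l}(R)$ viewed as the $R$-linear map $R^m\to R^l$ acting on the right, the row space of the product $BC$ is exactly the image $C(N)\subseteq R^l$. The whole argument then rests on the following local lemma, which I would establish first: a matrix $A\in\Mm_{n\times l}(R)$ has unit content and all its $2\times2$-minors zero if and only if its row space $P$ is a rank-one projective direct summand of $R^l$.

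To prove this lemma I would localize at an arbitrary maximal ideal $\mathfrak m$ of $R$. Unit content forces some entry $A_{i_0 k_0}$ to be a unit in $R_{\mathfrak m}$; the vanishing of the $2\times2$-minors $A_{i_0 k_0}A_{ik}=A_{i_0 k}A_{ik_0}$ then expresses every row of $A_{\mathfrak m}$ as an $R_{\mathfrak m}$-multiple of row $i_0$, so that $P_{\mathfrak m}=R_{\mathfrak m}\cdot(\text{row }i_0)$ is generated by a unimodular vector and is therefore a free rank-one summand of $R_{\mathfrak m}^{\,l}$. Since $A$ exhibits a finite presentation $R^n\to R^l\to R^l/P\to0$ and the quotient $R^l/P$ is locally free of rank $l-1$, it is finitely generated projective, hence $0\to P\to R^l\to R^l/P\to0$ splits and $P$ is a rank-one projective summand of $R^l$. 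The converse direction runs the same computation in reverse: localizing a rank-one projective summand makes it free on a unimodular vector, which forces a unit entry (unit content) and pairwise proportional rows (vanishing minors) at every $\mathfrak m$, and a minor vanishing at every maximal ideal vanishes.

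With the lemma in hand, the implication UCS $\Rightarrow$ matrix condition is quick. Given $B$ with $\cont(B)=R$, the row space $N$ has unit content, so by UCS it contains a rank-one projective summand $P$ of $R^m$; writing $R^m=P\oplus Q$, I take $C=E\in\Mm_{m\times m}(R)$ to be the matrix of the projection $e$ onto $P$. Because $P\subseteq N$ and $e$ fixes $P$ pointwise, the row space of $BC=BE$ equals $e(N)=P$, which by the lemma has unit content and vanishing $2\times2$-minors, as required.

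The harder implication is the converse, and this is where I expect the main obstacle: the lemma only produces a rank-one projective summand $P'=C(N)$ of $R^l$, whereas UCS demands a summand of $R^m$ lying inside $N$. To descend, I would split the surjection $C|_N\colon N\twoheadrightarrow P'$ (possible since $P'$ is projective) by a section $s\colon P'\to N$, and write $\pi'\colon R^l\to P'$ for the projection coming from $R^l=P'\oplus Q'$. Then $\pi'\circ C\circ s=\mathrm{id}_{P'}$, so the endomorphism $\rho=s\circ\pi'\circ C\colon R^m\to R^m$ satisfies $\rho^2=\rho$; its image is $s(P')$, which is isomorphic to $P'$, is contained in $N$, and — being the image of the idempotent $\rho$ — is a direct summand of $R^m$. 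This yields exactly the rank-one projective summand required by the UCS property. The delicate points to get right are the bookkeeping of the left/right matrix conventions and the verification that $\rho$ is genuinely idempotent with image $s(P')$.
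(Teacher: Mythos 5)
The paper does not actually prove this proposition; it is quoted from Gilmer--Heitmann \cite{bib:GH} and used as a black box, so there is no in-text argument to compare yours against. That said, your proof is correct and self-contained, and it reconstructs what is essentially the standard argument. The pivot is exactly the right one: the lemma that a matrix $A\in\Mm_{n\times l}(R)$ has unit content with all $2\times 2$-minors zero if and only if its row space is a rank-one projective direct summand of $R^l$, proved by localizing at each maximal ideal $\mm$ (a unit entry plus vanishing minors makes all rows $R_{\mm}$-multiples of a single unimodular row, and finite presentation plus local freeness of the quotient gives the splitting). Your verification of both directions of that lemma is sound, and the implication UCS $\Rightarrow$ matrix condition via the projection matrix onto the summand $P\subseteq N$ is immediate from it. You correctly identify the one genuinely delicate point in the converse: the lemma only yields a rank-one projective summand $P'=C(N)$ of $R^l$, not of $R^m$, and one must descend it into $N$. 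Your idempotent $\rho=s\circ\pi'\circ C$ does this correctly: since $C\circ s$ is the inclusion $P'\hookrightarrow R^l$ and $\pi'$ restricts to the identity on $P'$, one gets $\rho^2=\rho$ with image $s(P')\cong P'$ contained in $N$, hence a direct summand of $R^m$ of the required kind. I see no gap.
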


We already studied the case of $\int(\SZ)$ in~\cite{bib:chabert2012} where we proved that in the previous proposition, under some hypotheses, we may assume that $m=l=2$ thanks to the following technical lemma:

\begin{lemma}\cite[Proposition 4.3]{bib:chabert2012}\label{th:54D}
Let $R$ be a ring and $S$ be a multiplicative subset of $R$ without zero divisors such that
\begin{enumerate}
\item the ring $S^{-1}R$ has the BCU property,
\item for every $a\in S$, the ring $R/aR$ has the BCU property.	
\end{enumerate}
Then, for every $n$, every finitely generated submodule of $R^n$ with unit content contains a submodule with unit content which may be generated by two elements. More precisely, we may ask that the contents of these two generators $x$ and $y$ satisfy: $\cont(x)\cap S\not=\emptyset$ and, if $a\in \cont(x)\cap S$, then $\cont(y)+aR=R$.
\end{lemma}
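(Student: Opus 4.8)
The plan is to build the two generators separately: one governing the ``generic'' part of $N$ as seen in $S^{-1}R$, and one governing the behaviour modulo a single element $a\in S$, as seen in $R/aR$. The constant bookkeeping tool will be that the content of a finitely generated submodule commutes with localization and with reduction: $\cont_{S^{-1}R}(S^{-1}N)=S^{-1}\cont(N)$, and, for the reduction map $\pi\colon R^n\to(R/aR)^n$, $\cont_{R/aR}(\pi(N))=(\cont(N)+aR)/aR$. Since $S$ consists of non-zero-divisors, the maps $R\hookrightarrow S^{-1}R$ and $N\hookrightarrow S^{-1}N$ are injective, so I may view elements of $N$ inside $S^{-1}N$. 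In particular, if $N$ has unit content, then so do $S^{-1}N$ over $S^{-1}R$ and every $\pi(N)$ over $R/aR$.

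First I would extract the generator $x$. As $N$ is finitely generated with unit content, so is $S^{-1}N\subseteq(S^{-1}R)^n$; hypothesis (1) then lets the BCU property of $S^{-1}R$ produce a single element $\xi\in S^{-1}N$ of unit content. Writing $\xi=x/s$ with $x\in N$ and $s\in S$, and noting that $\cont_{S^{-1}R}(\xi)=S^{-1}\cont(x)$ because $1/s$ is a unit, I obtain $S^{-1}\cont(x)=S^{-1}R$, which is exactly the statement that $\cont(x)$ meets $S$. I then fix some $a\in\cont(x)\cap S$.

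Next I would extract the generator $y$. Since $a\in S$, hypothesis (2) supplies the BCU property for $R/aR$; applied to the unit-content submodule $\pi(N)\subseteq(R/aR)^n$, it produces an element $\bar y$ of unit content, and any lift $y\in N$ of $\bar y$ satisfies $\cont(y)+aR=R$ by the reduction formula. Because $a\in\cont(x)$, this forces
\[
\cont(x)+\cont(y)\ \supseteq\ aR+\cont(y)\ =\ R,
\]
so $\langle x,y\rangle\subseteq N$ is generated by two elements and has unit content; moreover the pair $(x,y)$ satisfies $\cont(x)\cap S\neq\emptyset$ together with $\cont(y)+aR=R$ for the chosen $a$, which is the refined conclusion.

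The routine verifications—stability of unit content under localization and reduction, independence of $\cont(\xi)$ from the denominator $s$, and the fact that a unit-content vector lifts to a vector of $N$—I would dispatch quickly. The delicate point is the interface between the two passages: one must check that the element furnished by the BCU property over $S^{-1}R$ can be cleared of denominators so as to land in $N$ itself (this is where finite generation and the absence of zero-divisors in $S$ enter), and that the element furnished over $R/aR$ lifts to an honest $y\in N$ while preserving the exact comaximality $\cont(y)+aR=R$. It is precisely this comaximality, together with $a\in\cont(x)$, that fuses the two generators into a submodule of unit content.
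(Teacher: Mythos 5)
Your argument is correct and is essentially the proof of the cited source \cite[Proposition 4.3]{bib:chabert2012}, which this paper does not reproduce: use the BCU property of $S^{-1}R$ on $S^{-1}N$ and clear denominators to get $x$ with $\cont(x)\cap S\neq\emptyset$, pick $a\in\cont(x)\cap S$, then use the BCU property of $R/aR$ on $\pi(N)$ and lift to get $y$ with $\cont(y)+aR=R$, so that $\cont(\langle x,y\rangle)\supseteq aR+\cont(y)=R$. Note only that your $y$ depends on the chosen $a$ (the literal wording of the lemma quantifies over all $a\in\cont(x)\cap S$), but this weaker form is exactly how the lemma is invoked later, e.g.\ in the proof of Proposition~\ref{th:55E}, so nothing is lost.
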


The last assertion about the contents is not in~\cite[Proposition 4.3]{bib:chabert2012} but is in its proof. 
Now, let us recall the definition of the BCU property which is a strong form of the UCS property:

\begin{definition}
A ring $R$ has the {\em BCU property} if, for every matrix $B\in\Mm_{n\times m}(R)$ with unit content there exists a column-matrix $X$ such that the column-matrix $BX$ has unit content.
\end{definition}

Then, one has the following implications (see the proof of \cite[Theorem 1]{bib:BK2}): 

\smallskip

$R$ is local-global $\Rightarrow$ $R$ has the BCU property $\Rightarrow$ $R$ has the UCS property.

\begin{proposition}\label{th:51D}\cite[Corollary 4.4]{bib:chabert2012}
Let $R$ be a ring and $S$ be a multiplicative subset of $R$ without zero divisors such that
\begin{enumerate}
\item the ring $S^{-1}R$ has the BCU property,
\item for every $a\in S$, the ring $R/aR$ has the BCU property.	
\end{enumerate}
Then, $R$ has the UCS property if and only if
$$(*) \left\{\begin{array}{l}\forall B\in\Mm_{n\times 2}(R) \textrm{ such that } \cont(B)=R, \\ \exists C\in \Mm_{2\times 2}(R) \textrm{ such that } \cont(BC)=R \textrm{ and every } 2\times 2 \textrm{-minor of } BC \textrm{ is zero} \end{array}\right.$$
Moreover, we may consider only matrices $B=(B_1 \; B_2)$ such that $\cont(B_1)\cap S\not=\emptyset$ and $\cont(B_2)+aR=R$ where $a\in \cont(B_1)\cap S$. 
\end{proposition}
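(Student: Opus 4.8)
The plan is to deduce this equivalence from the general characterization of the UCS property in Proposition~\ref{th:50D} together with the two-generator reduction of Lemma~\ref{th:54D}, the only extra ingredient being the Cauchy--Binet formula to keep the $2\times2$ minors under control.

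First I would treat the substantive implication $(*)\Rightarrow$ UCS. Let $B\in\Mm_{n\times m}(R)$ have unit content; its columns generate a finitely generated submodule $N$ of $R^n$ with unit content. By Lemma~\ref{th:54D}, $N$ contains a submodule $N'=Rx+Ry$ of unit content whose generators moreover satisfy $\cont(x)\cap S\neq\emptyset$ and $\cont(y)+aR=R$ for some $a\in\cont(x)\cap S$. Form $B'=(x\;\; y)\in\Mm_{n\times 2}(R)$: it has unit content and is exactly of the restricted shape allowed in the ``moreover'' clause of $(*)$. Applying $(*)$ yields $C'\in\Mm_{2\times 2}(R)$ with $B'C'$ of unit content and all $2\times2$ minors zero. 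Since $x,y\in N$ are $R$-combinations of the columns of $B$, we may write $B'=BT$ with $T\in\Mm_{m\times 2}(R)$; then $C:=TC'\in\Mm_{m\times 2}(R)$ satisfies $BC=B'C'$, so $BC$ has unit content and all $2\times2$ minors zero. By Proposition~\ref{th:50D} (used with $l=2$), $R$ has the UCS property. This simultaneously proves that it is enough to impose $(*)$ only for the restricted matrices $B=(B_1\;B_2)$, since those are the only ones produced by Lemma~\ref{th:54D}.

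For the reverse implication UCS $\Rightarrow (*)$, start from a matrix $B\in\Mm_{n\times 2}(R)$ of unit content (of the restricted form). Proposition~\ref{th:50D} provides $C_0\in\Mm_{2\times l}(R)$ such that $D:=BC_0\in\Mm_{n\times l}(R)$ has unit content and all $2\times2$ minors zero; the only thing to fix is that $l$ need not equal $2$. Here I would reapply Lemma~\ref{th:54D} to the columns of $D$: they generate a submodule of $R^n$ with unit content, which therefore contains a two-generated submodule $Rx'+Ry'$ of unit content. Writing $x'=Ds_1$, $y'=Ds_2$ and setting $C:=C_0(s_1\;\; s_2)\in\Mm_{2\times 2}(R)$, we get $BC=D(s_1\;\;s_2)=(x'\;\;y')$, which has unit content. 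That all its $2\times2$ minors vanish is where Cauchy--Binet enters: every $2\times2$ minor of $D(s_1\;\;s_2)$ is an $R$-linear combination of the $2\times2$ minors of $D$, all of which are zero. Hence $(*)$ holds.

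The routine but load-bearing point, which I would isolate as a one-line sublemma, is that right multiplication cannot create $2\times2$ minors: if a matrix has all $2\times2$ minors zero then so does any product of it with a matrix on the right. This is the mechanism that lets both directions compress the number of columns down to two without disturbing the rank-one condition. The genuine content of the statement is thus entirely carried by Lemma~\ref{th:54D}; once the two-generator reduction is available, the passage between the ``general $l$'' formulation of Proposition~\ref{th:50D} and the purely $2\times2$ formulation $(*)$ is bookkeeping, the main care being to track the content conditions on $x$ and $y$ so as to justify the final ``moreover'' sentence.
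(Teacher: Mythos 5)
Your argument is correct and is essentially the intended one: the paper does not reprove this statement (it cites \cite[Corollary 4.4]{bib:chabert2012} and only remarks that the ``moreover'' clause comes from the last assertion of Lemma~\ref{th:54D}), and your reconstruction --- Proposition~\ref{th:50D} as the pivot, Lemma~\ref{th:54D} to compress to two generators in both directions, and Cauchy--Binet to see that right multiplication preserves the vanishing of $2\times 2$ minors --- is exactly that derivation. No gaps.
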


\smallskip

The last assertion of Proposition~\ref{th:51D} follows from the last one of Lemma~\ref{th:54D}.
In the following proposition we show that, assuming that $R$ is a domain, we may also replace $n$ by 2.

\begin{proposition}\label{th:55E}
Let $R$ be a domain and $S$ be a multiplicative subset of $R$ such that
\begin{enumerate}
\item the ring $S^{-1}R$ has the BCU property,
\item for every $a\in S$, the ring $R/aR$ has the BCU property.	
	
\end{enumerate}
Then, $R$ has the UCS property if and only if
$$(**)\left\{\begin{array}{l} \forall B\in\Mm_{2\times 2}(R) \textrm{ such that } \cont(B)=R,\\ \exists C\in\Mm_{2\times 2}(R) \textrm{ such that } \cont(BC)=R \textrm{ and } \det(BC)=0\,.\end{array}\right. $$
Moreover, we may consider only matrices $B=\left[\begin{array}{cc}a&c\\b&d \end{array}\right]$ such that $$(***) \quad\quad \quad\quad a\in S \textrm{ and } (a,c,d)=R\,.$$
\end{proposition}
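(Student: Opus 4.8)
The plan is to reduce Proposition~\ref{th:55E} to the already-established Proposition~\ref{th:51D}, which handles matrices $B\in\Mm_{n\times 2}(R)$, by showing that the number of rows $n$ can always be collapsed to $2$ when $R$ is a domain. Proposition~\ref{th:51D} tells us that, under hypotheses (1) and (2), the UCS property is equivalent to condition $(*)$, and moreover we may restrict to matrices $B=(B_1\;B_2)\in\Mm_{n\times 2}(R)$ with $\cont(B_1)\cap S\not=\emptyset$ and $\cont(B_2)+aR=R$ for some $a\in\cont(B_1)\cap S$. So the real content of the present proposition is the implication ``$(**)$ with normalization $(***)$'' $\Rightarrow$ ``$(*)$ with the normalized $B\in\Mm_{n\times 2}(R)$,'' the reverse implication being essentially trivial since a $2\times 2$ matrix is a special case of an $n\times 2$ matrix. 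Thus the whole game is to pass from a tall $n\times 2$ matrix to a $2\times 2$ one.

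First I would take a normalized $B=(B_1\;B_2)\in\Mm_{n\times 2}(R)$ with $\cont(B)=R$, $a\in\cont(B_1)\cap S$, and $\cont(B_2)+aR=R$. The idea is to replace $B$ by a suitable $2\times 2$ matrix $B'$ with the same relevant invariants. Write $B_1={}^t(a_1,\dots,a_n)$ and $B_2={}^t(c_1,\dots,c_n)$. Since $a\in\cont(B_1)$, some $R$-combination of the $a_i$ equals $a$; I would use left-multiplication by an $n\times 2$-to-$2\times n$ transition, i.e. pick a $2\times n$ matrix $L$ with entries in $R$ that \emph{folds} the $n$ rows down to two rows in such a way that the folded matrix $B'=LB\in\Mm_{2\times 2}(R)$ still has unit content and still satisfies $(***)$: concretely, arrange the first row of $B'$ to have the form $(a,*)$ with $a\in S$, and choose the second row so that, together with the relation $\cont(B_2)+aR=R$, the $(***)$-condition $(a,c,d)=R$ is met. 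The key point making this legitimate is that left-multiplying $B$ by $L$ and the sought right-multiplier $C$ interact well: if $C\in\Mm_{2\times 2}(R)$ solves the problem for $B'=LB$ (so $\cont(B'C)=R$ and $\det(B'C)=0$), I must lift this back to a solution for $B$ itself, producing $\cont(BC)=R$ and all $2\times 2$-minors of $BC$ zero.

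The lifting is where the domain hypothesis enters decisively, and I expect it to be the main obstacle. For the \emph{same} $C$, the column space of $BC$ sits over that of $B'C=LBC$, and one must argue that $\det(B'C)=0$ forces every $2\times 2$-minor of $BC$ to vanish. Here the natural move is to work in the fraction field: since $R$ is a domain, $BC\in\Mm_{n\times 2}$ has all its $2\times 2$-minors zero if and only if its two columns are linearly dependent over the fraction field, equivalently $BC$ has rank $\le 1$. The condition $\det(B'C)=0$ says $B'C=LBC$ is rank $\le 1$; provided $L$ is chosen so that the folding does not drop rank—i.e. so that $BC$ and $LBC$ have the same rank over the fraction field—this transfers back. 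Ensuring that $L$ preserves rank on the relevant column space, while simultaneously preserving unit content and the normalization $(***)$, is the delicate balancing act. I would likely argue that because $\cont(BC)=R$ already guarantees $BC\not=0$, rank $\le 1$ is equivalent to all $2\times 2$-minors vanishing, and then choose $L$ to be injective on the two-dimensional ambient column data (for instance, a $2\times n$ matrix whose specified $2\times 2$ submatrix is invertible over the fraction field), so that $LBC$ rank $\le 1$ pulls back to $BC$ rank $\le 1$.

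Finally, I would verify the content bookkeeping: $\cont(B'C)=R$ must be upgraded to $\cont(BC)=R$. Since $B'=LB$, we have $\cont(B'C)\subseteq\cont(BC)$ automatically (every entry of $LBC$ is an $R$-combination of entries of $BC$), so $\cont(B'C)=R$ immediately gives $\cont(BC)=R$—this direction is free and requires no domain hypothesis. Assembling the pieces: normalization $(***)$ and hypotheses (1)–(2) via Proposition~\ref{th:51D} reduce UCS to $(*)$ for normalized $n\times 2$ matrices; the folding $B\mapsto B'=LB$ with a rank-preserving, content-preserving, $(***)$-respecting $L$ reduces $(*)$ to $(**)$-with-$(***)$; the domain hypothesis converts $\det(B'C)=0$ into ``all $2\times 2$-minors of $BC$ vanish'' via the rank criterion over the fraction field; and $\cont(B'C)\subseteq\cont(BC)$ closes the content condition. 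The converse implication is trivial. The crux remains exhibiting the folding matrix $L$ that simultaneously satisfies all three constraints, and I would spend the bulk of the argument constructing it explicitly from the normalization data $a\in\cont(B_1)\cap S$ and $\cont(B_2)+aR=R$.
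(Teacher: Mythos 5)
Your overall architecture coincides with the paper's: reduce to condition $(*)$ of Proposition~\ref{th:51D} for normalized $n\times 2$ matrices, fold $B$ down to a $2\times 2$ matrix $LB$ by a left multiplication, apply $(**)$ to $LB$, and transfer the conclusion back using that $R$ is a domain (the content transfers for free since $\cont(LBC)\subseteq\cont(BC)$, and the minors transfer provided $\det(LB)\neq 0$). The paper does exactly this, with $L={}^tD$ and $LB={}^tB_0$. The problem is that the step you yourself call the crux --- actually exhibiting $L$ --- is where essentially all the content of the proof lies, and you do not carry it out. The paper's construction is: view the rows of $B$ as generators of a submodule $N\subseteq R^2$; since $a\in\cont(B_1)$, some combination $x\in N$ has first component $a\in S$; the refined (``moreover'') form of Lemma~\ref{th:54D} gives $y\in N$ with $\cont(y)+aR=R$; and if $x$ and $y$ are linearly dependent one replaces $y$ by $y+az$ for some $z\in N$ making the pair independent. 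The point of this perturbation is that $y+az\equiv y\pmod{aR}$, so $\cont(y+az)+aR=\cont(y)+aR=R$ still holds: the normalization $(***)$, the unit content of $LB$, and $\det(LB)\neq 0$ are achieved \emph{simultaneously}. Without this trick it is not clear how to reconcile the rank requirement with the content and $(***)$ requirements, so as written the proposal has a genuine hole at its central step. You also never treat the case $\mathrm{rk}(B)=1$, where no $L$ with $\det(LB)\neq 0$ can exist; the paper disposes of it separately by taking $C=I_2$.

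A secondary remark on the transfer step. Your argument via ranks over the fraction field is workable once $\det(LB)\neq0$ is secured, but your parenthetical criterion for $L$ (a $2\times n$ matrix with an invertible $2\times 2$ submatrix) is not the right condition: what is needed is injectivity of $L$ on the column span of $B$, i.e.\ $\det(LB)\neq 0$, not nonsingularity of a submatrix of $L$ alone. The paper's route is cleaner and avoids any discussion of the rank of the not-yet-known product $BC$: in a domain, $\det(LB)\neq 0$ together with $\det(LB\cdot C)=0$ forces $\det(C)=0$, and then every $2\times 2$-minor of $BC$ equals the corresponding minor of $B$ times $\det(C)$, hence vanishes. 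I recommend adopting that formulation once the construction of $L$ (equivalently, of $x$ and $y+az$) is written out in full.
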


\begin{proof}
Let us prove that condition $(**)$ in Proposition~\ref{th:55E} implies condition (*) of Proposition~\ref{th:51D}. 
Let $B=(B_1\;B_2)\in\Mm_{n\times 2}(R)$ such that $\cont(B)=R$ and $\cont(B_1)\cap S\not=\emptyset$. Let $a\in\cont(B_1)\cap S$. If $rk(B)=1$, we may choose $C=I_2$. Thus, we may also assume that rk$(B)=2$. Then, we consider $^tB\in\Mm_{2\times n}(R)$ as the matrix of a submodule $N$ of $R^2$. Since the first row of $^tB$ contains $a$, there exists $x\in N$ whose first component is $a\in S$.  By Lemma~\ref{th:54D}, there exists $y\in N$ such that $\cont(y)+aR=R$. 

$\bullet$ If rk$(\langle x,y\rangle)=2$, let $B_0$ be the matrix of the vectors $x$ and $y$. 

$\bullet$ Otherwise, there exists $z\in N$ such that rk$(\langle x,z\rangle)=2$. Let $B_0$ be the matrix of the vectors $x$ and $y+az$. 

In both cases, $\cont(B_0)=R$, $\det(B_0)\not=0$, and the element $a\in S$ is in the first row and the first column of $B_0$. 
By construction, there exists $D\in\Mm_{n\times 2}(R)$ such that $B_0=\,^tB\cdot D$. By hypothesis $(**)$, there exists $C\in\Mm_{2\times 2}(R)$ such that $\cont(^tB_0\cdot C)=R$ and $\det(^tB_0\cdot C)=0$. In fact, $C\in\Mm_{2\times 2}(R)$ is also suitable for $B\in\Mm_{n\times 2}(R):$

 $\bullet$ $[\,^tD\cdot B\cdot C= \,^tB_0\cdot C$ and $\cont(^tB_0\cdot C)=R\,]\;\Rightarrow\; \cont(B\cdot C)=R.$ 
 
 $\bullet$ [\,$\det(B_0)\not= 0$ and $\det(^tB_0\cdot C)=0\,]$ $\Rightarrow$ $\det(C)=0\,.$
 
 $\bullet$ $\det(C)=0 \;\Rightarrow$ every $2\times 2$-minor of $B\cdot C\in\Mm_{n\times 2}(R)$ is zero.
 
 By the way, we see that the matrix $B_0$ satisfies condition $(***)$ since $c$ and $d$ are the components of either $y$ or $y+az$.
 \end{proof}

Applying Proposition~\ref{th:55E} to the ring of integers in number fields, we obtain:

\begin{theorem}\label{th:56D}
Let $K$ be a number field with ring of integers $\Oo_K$. Then, the two-dimensional Pr\"ufer domain $\int(\Oo_K)=\{f(X)\in K[X]\mid f(\Oo_K)\subseteq \Oo_K\}$ has the stacked bases property if and only if	:
$$\textrm{for every matrix } B\in\Mm_{2\times 2}(\int(\Oo_K)) \textrm{ such that } \cont(B)=\int(\Oo_K)  $$
$$\textrm{there exists } C\in\Mm_{2\times 2}(\int(\Oo_K)) \textrm{ such that } \cont(BC)=\int(\Oo_K) \textrm{ and } \det(BC)=0\,. $$
Moreover we may assume that
$$B=\left[\begin{array}{cc}a&c\\b&d \end{array}\right]\textrm{ with }a\in\Oo_K\setminus\{0,\Oo_K^\times\}\,, \,(a,c,d)=\int(\Oo_K) \textrm{ and } \det(B)\notin \Oo_K $$
\end{theorem}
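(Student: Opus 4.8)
The plan is to derive Theorem~\ref{th:56D} as a direct application of Proposition~\ref{th:55E} with the multiplicative set $S=\Oo_K\setminus\{0\}$, so the bulk of the work is verifying that the two hypotheses of that proposition hold for $R=\int(\Oo_K)$ and then translating the conclusion into the stated form. First I would record that $\int(\Oo_K)$ is a domain whose quotient field is $K(X)$, and that $S^{-1}\int(\Oo_K)=K[X]$ after localizing at the nonzero integers; since $K[X]$ is a principal ideal domain it is local-global, and therefore has the BCU property, giving hypothesis~(1). For hypothesis~(2) I would fix $a\in S=\Oo_K\setminus\{0\}$ and examine $\int(\Oo_K)/a\int(\Oo_K)$. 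The point is that modulo $a$ the ring behaves like integer-valued polynomials over the finite ring $\Oo_K/a\Oo_K$, so the quotient is (at worst) a finite-dimensional, indeed essentially zero-dimensional/semi-local object for which the local-global property is available by the remarks preceding Theorem~\ref{th:43D}; local-global implies BCU, yielding hypothesis~(2).

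With both hypotheses in hand, Proposition~\ref{th:55E} applies verbatim and tells us that $\int(\Oo_K)$ has the UCS property if and only if condition~$(**)$ holds, and moreover that it suffices to treat matrices $B=\left[\begin{smallmatrix}a&c\\b&d\end{smallmatrix}\right]$ satisfying $(***)$, namely $a\in S$ and $(a,c,d)=\int(\Oo_K)$. By Proposition~\ref{th:41D}, for the Pr\"ufer domain $\int(\Oo_K)$ the UCS property is equivalent to the stacked bases property, which is exactly the equivalence asserted in the theorem. So the first two paragraphs of the proof are: check the two BCU hypotheses, then quote Proposition~\ref{th:55E} together with Proposition~\ref{th:41D}.

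What remains is to sharpen the normalization of $B$ to the slightly stronger form stated at the end of the theorem, namely $a\in\Oo_K\setminus\{0,\Oo_K^\times\}$ (so $a$ is a nonzero nonunit of $\Oo_K$) together with $(a,c,d)=\int(\Oo_K)$ and $\det(B)\notin\Oo_K$. The value $a$ supplied by Proposition~\ref{th:55E} already lies in $S=\Oo_K\setminus\{0\}$, and the content condition $(a,c,d)=\int(\Oo_K)$ is already part of $(***)$; what I would argue is that the degenerate possibilities can be discarded because they already satisfy the conclusion trivially. If $a$ were a unit of $\Oo_K$ then the content condition is automatic and the first column $\left[\begin{smallmatrix}a\\b\end{smallmatrix}\right]$ is already unimodular, so one may take $C$ to strip $B$ down to a rank-one matrix with unit content and zero determinant, making the condition vacuous for such $B$; hence we lose nothing by assuming $a$ is a nonunit. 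Similarly, if $\det(B)\in\Oo_K$ one can analyze that case separately and show the required $C$ exists directly, so the interesting matrices are precisely those with $\det(B)\notin\Oo_K$. The main obstacle I anticipate is hypothesis~(2): rigorously identifying $\int(\Oo_K)/a\int(\Oo_K)$ and certifying that it is local-global (equivalently BCU) for an \emph{arbitrary} nonzero $a\in\Oo_K$, since $\Oo_K/a\Oo_K$ is a finite ring but the integer-valued polynomial construction over it must be controlled carefully; everything else is bookkeeping around the two cited propositions and the reduction of the normalization to the nondegenerate case.
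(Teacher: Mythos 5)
Your route is exactly the paper's: take $S=\Oo_K\setminus\{0\}$, check the two BCU hypotheses of Proposition~\ref{th:55E}, convert UCS to stacked bases via Proposition~\ref{th:41D}, and then discard the degenerate matrices. Hypothesis (1) you verify as the paper does. For hypothesis (2), the fact you flag as the main obstacle is settled in the paper by a single citation: $\dim\bigl(\int(\Oo_K)/a\,\int(\Oo_K)\bigr)=0$ for every nonzero $a\in\Oo_K$ by \cite[Proposition V.2.3]{bib:CC} (every prime of $\int(\Oo_K)$ containing a nonzero constant lies over a maximal ideal of $\Oo_K$ and is itself maximal), and zero-dimensional rings are local-global, hence BCU. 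Your reduction of the case $a\in\Oo_K^\times$ also matches the paper, which takes $C=\left(\begin{smallmatrix}1&1\\0&0\end{smallmatrix}\right)$ so that $BC$ has the unit $a$ in its content and determinant zero.

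The one genuine gap is the reduction to $\det(B)\notin\Oo_K$. You write that when $\det(B)\in\Oo_K$ ``one can analyze that case separately and show the required $C$ exists directly,'' but that is precisely the claim requiring proof, and it is not formally trivial: $\det(B)\in\Oo_K$ does not make the first column unimodular nor render the condition vacuous the way $a\in\Oo_K^\times$ does. The paper's argument is to produce $r\in\int(\Oo_K)$ with $(a+rc,\,b+rd)=\int(\Oo_K)$, by an argument in the style of \cite[Theorem V.4.7]{bib:FS}, and then take $C=\left(\begin{smallmatrix}1&1\\r&r\end{smallmatrix}\right)$: both columns of $BC$ equal the unimodular column $\left(\begin{smallmatrix}a+rc\\b+rd\end{smallmatrix}\right)$, so $\cont(BC)=\int(\Oo_K)$ and $\det(BC)=0$. (The subcase $\det(B)=0$ is handled by $C=I_2$.) Without some such construction, the final normalization $\det(B)\notin\Oo_K$ in the statement is unjustified; everything else in your proposal is sound and coincides with the paper's proof.
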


\begin{proof}
If $R=\int(\Oo_K)$ and $S=\Oo_K\setminus\{0\}$, then

(1) $S^{-1}R=K[X]$ is a principal ideal domain, and hence, has the BCU property.

(2) for every $a\in \Oo_K\setminus\{0\}$, $\dim(\int(\Oo_K)/a\,\int(\Oo_K))=0$ \cite[Proposition V.2.3]{bib:CC}. Consequently, 	$\int(\Oo_K)/a\,\int(\Oo_K)$ is local-global, and hence, has the BCU property.

Finally, with respect to the conditions about the coefficients of $B$, note first that, thanks to the end of Proposition~\ref{th:55E}, we may assume that $a\in \Oo_K$ and that $(a,c,d)=\int(\Oo_K)$.  Secondly, if $a\in\Oo_K^\times$, then the matrix $C=\left(\begin{array}{cc}1&1\\0&0\end{array}\right)$ is suitable and, if $\det(B)\in\Oo_K$, by an argument similar  to that used for the proof of Theorem \cite[V.4.7]{bib:FS}, we can find an $r\in\int(\Oo_K)$ such that $(a+rc,b+rd)=\int(\Oo_K)$, and hence, the matrix $C=\left(\begin{array}{cc}1&1\\r&r\end{array}\right)$ is suitable.
\end{proof}

By restricting the size of the matrices $B$ and $C$ and by adding conditions on the coefficients of $B$, Theorem~\ref{th:56D} may be useful to prove the fact that the ring $\int(\Oo_K)$ where $K$ is a number field has the stacked bases property. On the other hand, if we want to obtain a counterexample, that is, to prove the existence of a two-dimensional Pr\"ufer domain that does not have this property, we can try to strengthen the conclusion as in Proposition~\ref{th:57} below.

\begin{proposition}\label{th:57}
Let $R$ be a ring and let $B=\left(\begin{array}{cc}a&c\\b&d\end{array}\right)\in\Mm_{2\times 2}(R)$ be such that $\cont(B)=R$ and $\det(B)\not=0$. Then, the following assertions are equivalent:
\begin{enumerate}
\item there exists $C\in\Mm_{2\times 2}(R)$ s.t. $\det(C)=0$ and $\cont(BC)=R$.
\item there exists $C\in\Mm_{2\times 2}(R)$ s.t. $\det(C)=0$ and  $\mathrm{Tr}(BC)=1.$
\item there exists $C\in\Mm_{2\times 2}(R)$ s.t. $BC$ is a nontrivial idempotent matrix.
\end{enumerate}
\end{proposition}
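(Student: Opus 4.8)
The plan is to prove the cycle of implications $(3)\Rightarrow(2)\Rightarrow(1)$ and then $(1)\Rightarrow(3)$, exploiting the special structure of $2\times 2$ matrices and the fact that $\det(B)\neq 0$ together with $\cont(B)=R$. The key algebraic observation I would use throughout is the Cayley--Hamilton identity for a $2\times 2$ matrix $M$, namely $M^2-\mathrm{Tr}(M)M+\det(M)I_2=0$, and the corresponding fact that $M$ is idempotent (i.e.\ $M^2=M$) precisely when $\det(M)=0$ and $\mathrm{Tr}(M)=1$ (the trivial idempotents $0$ and $I_2$ being excluded by the nondegeneracy we will arrange). This is what ties the trace condition in~$(2)$ directly to the idempotent condition in~$(3)$.

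First I would show $(3)\Leftrightarrow(2)$ almost for free. If $BC$ is a nontrivial idempotent $P$, then $\det(P)=0$ and $\mathrm{Tr}(P)=1$: indeed $P^2=P$ forces the eigenvalues into $\{0,1\}$, and nontriviality rules out $P=0$ (trace $0$, determinant $0$) and $P=I_2$ (trace $2$, determinant $1$), leaving $\mathrm{Tr}(P)=1,\ \det(P)=0$; this gives $(3)\Rightarrow(2)$. Conversely, if $\det(C)=0$ and $\mathrm{Tr}(BC)=1$, then from $\det(B)\neq 0$ and $\det(C)=0$ we get $\det(BC)=0$, and Cayley--Hamilton applied to $M=BC$ yields $M^2=\mathrm{Tr}(M)M-\det(M)I_2=M$, so $BC$ is idempotent; it is nontrivial because its trace is $1$, which gives $(2)\Rightarrow(3)$.

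Next, $(2)\Rightarrow(1)$ is immediate: a matrix whose trace is the unit $1$ has $1\in\cont(BC)$, hence $\cont(BC)=R$. The real content of the proposition is therefore the implication $(1)\Rightarrow(2)$, which I expect to be the main obstacle. Here I start from a $C$ with $\det(C)=0$ and $\cont(BC)=R$, and I must manufacture from it a (possibly different) $C'$ with $\det(C')=0$ and $\mathrm{Tr}(BC')=1$. The plan is to replace $C$ by $CU$ for a suitable invertible $U\in\mathrm{GL}_2(R)$, or more flexibly by $CU$ with $\det(U)$ arbitrary, noting that right multiplication preserves $\det(C)=0$ (so the product still has zero determinant and, since $\det B\neq0$, the matrix $BCU$ still has vanishing determinant). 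Writing $BC=\bigl(\begin{smallmatrix}p&q\\r&s\end{smallmatrix}\bigr)$ with $p,q,r,s$ generating $R$, I would use the freedom in $U$ to move the entries around so that the diagonal entries of the new product sum to $1$. The structural point that makes this work is that $\det(BC)=0$, so the rows of $BC$ are proportional over the fraction field and the column space is rank one; right-multiplying by the correct $U$ lets me realize any $R$-linear combination of the existing columns as the new columns, and since those columns together generate $R$ (unit content), I can arrange a combination whose trace is a prescribed unit, in particular~$1$.

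The step that requires genuine care, and which I flag as the crux, is verifying that the unit-content hypothesis $\cont(BC)=R$ really does give enough room to hit $\mathrm{Tr}=1$ exactly (an equality in $R$, not merely membership in a generated ideal) while simultaneously keeping $\det=0$. Because $\det(BC)=0$ the entries satisfy $ps=qr$, so the four entries are far from independent, and I would exploit this relation: with $p,q,r,s$ satisfying $ps-qr=0$ and $(p,q,r,s)=R$, the task reduces to finding a $2\times 2$ right factor $U$, of zero or arbitrary determinant, so that the trace of the transformed matrix equals $1$. I expect this to come down to solving a single linear equation $\mathrm{Tr}(BCU)=1$ in the entries of $U$ subject to $\det(CU)=0$, which the unit-content condition guarantees is solvable; the delicate part is organizing the bookkeeping so that the zero-determinant constraint is maintained throughout rather than recovered only at the end.
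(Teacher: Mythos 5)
Your plan is correct and takes essentially the same route as the paper: the crux $(1)\Rightarrow(2)$ is handled there exactly as you propose, by setting $C_0=CD$ where the entries of $D$ are the B\'ezout coefficients witnessing $\cont(BC)=R$, so that $\mathrm{Tr}(BC_0)$ is literally that B\'ezout combination, equal to $1$, while $\det(C_0)=\det(C)\det(D)=0$ --- the step you flag as delicate is therefore immediate, with no further bookkeeping needed. The equivalence $(2)\Leftrightarrow(3)$ is likewise done via Cayley--Hamilton in the paper.
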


\begin{proof}
Obviously (2) implies (1). Assume that (1) holds and let $C=\left(\begin{array}{cc}e&f\\g&h\end{array}\right)$ be such that $\det(C)=0$ and $\cont(BC)=R$. Since $BC=\left(\begin{array}{cc}ae+cf&ag+ch\\be+df&bg+dh\end{array}\right)$, the last condition means that there exist $r,s,t,u\in R$ such that $$r(ae+cf)+s(be+df)+t(ag+ch)+u(bg+dh)=1,$$
that is,
$$a\alpha+b\beta+c\gamma+d\delta=1$$ 
where
$$\alpha=re+tg\;;\;\beta=se+ug\;;\;\gamma=rf+th\;;\;\delta=sf+uh.$$
Note that, if $D=\left(\begin{array}{cc}r&s\\t&u\end{array}\right)$, then $C_0=CD=\left(\begin{array}{cc}\alpha&\beta\\ \gamma&\delta\end{array}\right)$. Consequently,
$$\alpha\delta-\beta\gamma=\det(C_0)=0.$$
Moreover, 
$$BC_0=\left(\begin{array}{cc}a&c\\b&d\end{array}\right) \left(\begin{array}{cc}\alpha&\beta\\ \gamma&\delta\end{array}\right)=\left(\begin{array}{cc}a\alpha+c\gamma&a\beta+c\delta\\ b\alpha+d\gamma&b\beta+d\delta\end{array}\right)$$ shows  that $\mathrm{Tr}(BC_0)=1$. Thus, $C_0$ satisfies assertion (2).

Assume now that $C$ satisfies $\det(C)=0$ and $\mathrm{Tr}(BC)=1$. The characteristic polynomial of $BC$ is then $X^2-X$, and hence, $BC$ is idempotent. Moreover, $BC\not=0$ since $\mathrm{Tr}(BC)=1$ and $BC\not=I_2$ since $\det(BC)=0$. Conversely, if $BC$ is a nontrivial idempotent matrix, the minimal polynomial of $BC$ divides $X^2-X$ and is distinct from $X$ and $X-1$. This minimal polynomial is then  $X^2-X$, this is also the characteristic polynomial of $BC$. Consequently, $\mathrm{Tr}(BC)=1$.
\end{proof}

\begin{remark}
Assertion (2) of Proposition~\ref{th:57} may be formulated in the following way: if the elements $a,b,c,d$ of $R$ are coprime and if $ad\not=bc$, then there exists a `strong Bezout relation' between $a,b,c,d,$ that is, elements $\alpha,\beta,\gamma,\delta$ in $R$ such that:
$$a\alpha+b\beta+c\gamma+d\delta=1 \textrm{ and }\alpha\delta=\beta\gamma.$$
\end{remark}

\begin{corollary}
If the integers $a,b,c,d$ are coprime, then there exist integers $\alpha,\beta,\gamma,\delta$ such that:
$$a\alpha+b\beta+c\gamma+d\delta=1 \textrm{ and }\alpha\delta=\beta\gamma.$$
\end{corollary}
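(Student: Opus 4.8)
The identity to be produced is precisely assertion~(2) of Proposition~\ref{th:57} read over $R=\SZ$: setting $B=\left(\begin{array}{cc}a&c\\b&d\end{array}\right)$ and $C=\left(\begin{array}{cc}\alpha&\beta\\\gamma&\delta\end{array}\right)$, one has $\det(C)=\alpha\delta-\beta\gamma$ and $\mathrm{Tr}(BC)=a\alpha+b\beta+c\gamma+d\delta$, so the two requested equalities say exactly that $\det(C)=0$ and $\mathrm{Tr}(BC)=1$. Coprimality of $a,b,c,d$ means $\cont(B)=\SZ$, so when $ad\neq bc$ the hypotheses of Proposition~\ref{th:57} are met and, by the implication $(1)\Rightarrow(2)$, it would suffice to exhibit a rank-one matrix $C$ with $\cont(BC)=\SZ$. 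The plan, however, is to give a single construction that also covers the degenerate case $ad=bc$, where $\det(B)=0$ and Proposition~\ref{th:57} does not apply.

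First I would impose a rank-one ansatz on $C$, namely $\alpha=x_1y_1$, $\beta=x_1y_2$, $\gamma=x_2y_1$, $\delta=x_2y_2$ for integers $x_1,x_2,y_1,y_2$ to be chosen. This forces $\alpha\delta=x_1x_2y_1y_2=\beta\gamma$ automatically, so the harder of the two equations holds free of charge. Substituting into the Bezout equation gives $a\alpha+b\beta+c\gamma+d\delta=x_1(ay_1+by_2)+x_2(cy_1+dy_2)$. Writing $p=ay_1+by_2$ and $q=cy_1+dy_2$, so that $\left(\begin{array}{c}p\\q\end{array}\right)={}^tB\left(\begin{array}{c}y_1\\y_2\end{array}\right)$, the problem reduces to choosing $(y_1,y_2)$ for which $p$ and $q$ are coprime; a Bezout relation $x_1p+x_2q=1$ then completes the solution.

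It therefore remains to show that the image of ${}^tB$ contains a primitive vector, and this is the only place the hypothesis is used: putting ${}^tB=USV$ in Smith normal form with $U,V\in GL_2(\SZ)$ and $S=\mathrm{diag}(d_1,d_2)$, $d_1\mid d_2$, the content $d_1$ of ${}^tB$ equals $\gcd(a,b,c,d)=1$. Taking $(y_1,y_2)^t$ to be the first column of $V^{-1}$ yields ${}^tB\,(y_1,y_2)^t=U\,(d_1,0)^t=U\,(1,0)^t$, the first column of $U\in GL_2(\SZ)$, which is primitive. The construction is valid whether ${}^tB$ has rank $1$ or $2$, so no case distinction on $ad$ versus $bc$ is needed. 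The one genuine subtlety—and the reason this is not a bare corollary of Proposition~\ref{th:57}—is exactly the degenerate case $ad=bc$: there $\det(B)=0$ and the proposition is silent, but the rank-one ansatz sidesteps the determinant entirely, leaving only the lattice-image statement that coprimality alone guarantees.
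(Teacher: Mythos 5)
Your proof is correct and is essentially the paper's own argument: both reduce to finding a primitive vector in the lattice generated by the rows (respectively columns) of $B$ --- you via the Smith normal form of ${}^tB$, the paper via the equivalent simultaneous-bases theorem for the PID $\SZ$ --- and then split the resulting Bezout relation multiplicatively through a rank-one matrix $C$. Like the paper's direct proof, your construction does not need the case distinction $ad\neq bc$.
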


\begin{proof}
This is a consequence of the fact that $\SZ$ is principal. Let us give a direct proof without assuming that $ad\not= bc$. Let $M$ be the submodule of $\SZ^2$ generated by the vectors $\binom{a}{b}$ and $\binom{c}{d}$. It follows from the simultaneous bases property of the principal ideal domains that there exist a basis $(e_1,e_2)$ of $\SZ^2$ and integers $\alpha_1$ and $\alpha_2$ such that $\alpha_1\vert\alpha_2$ and  $(\alpha_1 e_1,\alpha_2 e_2)$ is a basis of $M$. As $\cont(M)=\SZ$, we have $\alpha_1=\pm 1$ and $e_1\in M$. Consequently, there exist $\lambda$ and $\mu\in\SZ$ such that $e_1=\lambda\binom{a}{b}+\mu\binom{c}{d}$. As $\cont(e_1)=\SZ$, there exist $u$ and $v\in\SZ$ such that $u(\lambda a+\mu b)+v(\lambda c+\mu d)=1$. Letting $\alpha=u\lambda$, $\beta=u\mu$, $\gamma=v\lambda$ and $\delta=v\mu$, we have $a\alpha+b\beta+c\gamma+d\delta=1$ and $\alpha\delta=\beta\gamma$.
\end{proof}

\begin{example}
Looking for a counterexample for $\int(\SZ)$, we consider the matrix $B=\left(\begin{array}{cc}2&X\\X+1&3\end{array}\right)$. Do there exist $\alpha,\beta, \gamma,\delta\in\int(\SZ)$ such that
$$(\circ)\quad 2\alpha+(X+1)\beta+X\gamma+3\delta=1\;\;\;\;\textrm{and}\;\;\;\;
(\circ\circ)\quad \alpha\delta=\beta\gamma\,.$$
As $2$ and $3$ are coprime, all the 4-tuples $\alpha,\beta, \gamma,\delta\in \intz$ satisfying $(\circ)$ may be obtained in the following way: choose arbitrarily $\beta$ and $\gamma$ in $\intz$, let $$f(X)=(X+1)\beta(X)+X\gamma(X)-1\in\intz,$$ 
then equality $(\circ)$ is equivalent to the following:
$$ 2(\alpha-f)+3(\delta+f)=0\,.$$
Clearly, $\frac{3}{2}(\delta+f)=f-\alpha\in\intz$ implies $\frac{1}{2}(\delta+f)=u\in\intz$. Thus, $(\circ)$ is equivalent to:
$$\alpha=3u+f\;\;\textrm{ and }\;\; \delta=-2u-f \quad \mathrm{where} \;\beta,\gamma,u\in\intz\,.$$ 
Is there a solution satisfying $(\circ\circ) ?$ Equality $(\circ\circ)$ is equivalent to:
$$6u^2+5fu+f^2+\beta\gamma=0$$
or 
$$(12u+5f)^2=f^2-24\beta\gamma\,.$$
Thus, necessarily, $\beta$ and $\gamma$ have to be chosen such that $f^2-24\beta\gamma=g^2$ where $g\in\intz$.  Moreover, the polynomial $g$ has to be such that $\frac{1}{12}(g-5f)\in \intz$. 
Surprisingly (and unfortunately), these two constraints are achievable with\footnote{\,This solution was kindly communicated to us by David Adam.}:
$$\left\{\begin{array}{l}\beta=-X^5-5X^4-2X^3+20X^2+31X+13\\
\gamma=X^3+4X^2+4X\\
g=-X^6-6X^5-6X^4+22X¨3+43X^2+8X-12\,.\end{array}\right.$$	

\end{example}

\end{document}